\documentclass[11pt]{amsart}
\usepackage{amsfonts}
\usepackage{hyperref}
\usepackage[utf8]{inputenc}
\usepackage[T1]{fontenc}
\usepackage[dvips]{graphicx}
\usepackage{amssymb}
\usepackage{amsmath}
\usepackage{dsfont}
\usepackage{color}
\usepackage{latexsym}
\usepackage{bbm}
\usepackage{color}
\usepackage{amsthm}
\usepackage{multicol}
\usepackage[top   = 2.75cm,
bottom = 2.50cm,
left   = 2.50cm,
right  = 2.00cm]{geometry}

\bibstyle{plain}
\theoremstyle{plain}
\newtheorem{theorem}{Theorem}
\newtheorem{lemma}[theorem]{Lemma}

\newtheorem{corollary}[theorem]{Corollary}

\theoremstyle{remark}

\newtheorem{remark}[theorem]{Remark}

\begin{document}

\title[]{An analogue of the Gauss-Kuzmin problem
 for continued $A_{2}$-fractions
}
\author[M.\ V.\ Pratsiovytyi, O.\ P.\ Makarchuk and 
S.\ P.\ Ratushniak]{ M.\ V.\ Pratsiovytyi, O.\ P.\ Makarchuk and S.\ P.\ Ratushniak}

\newcommand{\eacr}{\newline\indent}

\address{M.V. Pratsiovytyi\eacr
Institute of Mathematics of NASU,
 Dragomanov Ukrainian State University, Kyiv,
Ukraine\acr ORCID 0000-0001-6130-9413}
\email{prats4444@gmail.com}

\address{O.P. Makarchuk\eacr
Volodymyr Vynnychenko Central Ukrainian State University,
Institute of Mathematics of NASU
\acr ORCID 0000-0002-1001-8568}
\email{makarchuk@imath.kiev.ua}

\address{S.P. Ratushniak\eacr
Institute of Mathematics of NASU,
 Dragomanov Ukrainian State University, Kyiv,
Ukraine\acr ORCID 0009-0005-2849-6233}
\email{ratush404@gmail.com}

\subjclass[2020]{Primary: 11K50; Secondary: 37A45}
\keywords{Gauss-Kuzmin-type problem, invariant measure, continued $A_{2}$-fractions, left shift operator, representation by an $A_2$-continued fraction.\\This work was supported by a grant from the Simons
Foundation (SFI-PD-Ukraine-00014586, M.P., O.M., S.R.)}

\date{\today}

\newcommand{\acr}{\newline\indent}

\begin{abstract}
In this paper, for the chain $A_2$-representation of numbers $x\in[\frac{1}{2};1]$, that is, $x=1/u_1+1/u_2+....+1/u_n+...$, where $u_n\in \{\frac{1}{2},1\}$, a problem in metric number theory and dynamical systems is solved which is an analogue of Gauss’s problem for simple continued fractions. Specifically, we study the asymptotic behavior of the sequence of Lebesgue measures $\lambda$ of the sets
\[E_n(x)=\{u: u\in [\frac{1}{2};1], \; 1/u_{n+1}+1/u_{n+2}+1/u_{n+3}+...<x\},\]
where $u= 1/u_{1}+1/u_{2}+1/u_{3}+...+1/u_{k}+...$, $u_k\in \{\frac{1}{2};1\}$.

It is proved that $\lim\limits_{n\to\infty}\lambda[E_n(x)]=\frac{1}{2}\ln^{-1}\left(\frac{10}{9}\right)\ln\frac{5(x+1)}{3(x+2)}=\frac{1}{2}f(x)$. Estimates for the rate of convergence of the sequence $\frac{1}{2}f_n(x)=\lambda[E_n(x)]$ to the corresponding limiting function $\frac{1}{2}f(x)$ are obtained.
\end{abstract}

\maketitle

\section*{Introduction}
We consider real numbers represented as continued fractions.

Representations of real numbers by simple continued fractions (whose partial quotients are natural numbers) have numerous applications across various areas of pure and applied mathematics. This system of encoding (representation) of real numbers possesses an infinite alphabet and zero redundancy: each real number admits at most two representations, while every irrational number has a unique representation. The convergence properties yield of simple continued fractions provides highly efficient rational approximations to irrational numbers.

Representations of real numbers by continued fractions over a finite alphabet (whose elements are positive real numbers), the development of whose theory was initiated in~\cite{prats}, have also found applications in the theory of continuous locally complex functions, the theory of singular probability distributions, fractal analysis, and related fields. In the case of number representations by $A_2$-continued fractions (a two-symbol alphabet) under the assumption of zero redundancy, two important structural features arise: binary symbolic encoding and the continued-fraction nature of the expansion. These properties motivate the investigation of analogues of classical problems from metric and probabilistic number theory and ergodic theory that were originally studied for simple continued fractions. One such problem is the Gauss–Levy problem.

 Let us recall the key concepts.
An expression of the form
\begin{equation}\label{mp1}
a_0+1/a_1+1/a_2+...+1/a_n+...=[a_{0};a_{1},a_{2},...,a_{n},...]
\end{equation} is called an infinite continued fraction, $a_{n}$ is called its element (or link), while the finite continued fraction $[a_{0};a_{1},a_{2},...,a_{n}]=a_0+1/a_1+1/a_2+...+1/a_n$ is called the segment of the $n$-th rank of the continued fraction~\eqref{mp1}. If the elements of a continued fraction are natural numbers, then the continued fraction is called elementary (or simple).  Next, we will consider continued fractions in which $a_0=0$ and the terms of the sequence $(a_n)$ are positive real numbers.

A fraction $\frac{p_n}{q_n}$, which is the value of the finite continued fraction $[0;a_{1},a_{2},...,a_{n}],$ is called the $n$-th convergent of the continued fraction $[0;a_{1},a_{2},...,a_{k},...]$. The finite limit (when it exists) of the sequence $(\frac{p_n}{q_n})$ is called the value of the infinite continued fraction~\eqref{mp1}. In this case, the continued fraction is said to be convergent. A classical convergence criterion is Seidel's theorem~\cite{seid}: the continued fraction~\eqref{mp1} converges if and only if the series
$\sum\limits_{n=1}^{\infty}a_n$ diverges.

The mapping $T^n$, defined by
\begin{equation}\label{oper}
T^n([0;a_{1}, a_{2}, ..., a_{n}, ...])=[0;a_{n+1}, a_{n+2},...,a_{n+k}, ...],
\end{equation}
is called the $n$-fold left shift operator (in the space of representation of numbers). Clearly, \(T^n(x)=T(T^{n-1}(x)),\)
where $T^1(x)\equiv T(x)=\frac{1}{x}-a_1(x)$.

To ensure that the mapping $T^n$ defines a well-defined function on an interval, one must address the ambiguity at points that admit more than one continued fraction representation. This is easily resolved by adopting a convention to use only one of the existing expansions.
 
One of the earliest problems in the metric theory of simple continued fractions and the associated dynamical systems is Gauss's problem on the asymptotic behaviour of the Lebesgue measure $\lambda$ of the set $E_n(x)=\{a\in [0;1]: T^n(a)\leq x\}$.

Gauss established that $\lim\limits_{n\to\infty}\lambda (E_n(x))=\frac{\ln(1+x)}{\ln2}$, although he never published a proof. Gauss was also interested in the order of smallness of the difference $\lambda(E_n(x))-\frac{\ln(1+x)}{\ln2}$. A proof of the existence of the limit together with estimates for this difference was later obtained by R. O. Kuzmin in 1928~\cite{kuz2}. Therefore this problem is commonly known as the Gauss-Kuzmin problem for simple continued fractions~\cite{chin}. The first improvement of Kuzmin's estimates was due to P. Levy~\cite{lev}.
An analogous problem for other numerical expansions becomes both interesting and nontrivial whenever the underlying coding system is not self-similar, for example, for Engel series or the Ostrogradsky--Sierpinski--Pierce expansions~\cite{M_2023}.

In the present work we study an analogue of the Gauss--Kuzmin problem for $A_2$-continued fractions (continued fractions with the finite alphabet $A=\{\frac{1}{2},1\}$).

Recall~\cite{prats} that an infinite $A_2$-continued fraction (or simply an $A_2$-fraction) is called a continued fraction of the form
  $$0+1/\alpha_1+1/\alpha_2+...+1/\alpha_n+...=[0;\alpha_{1},\alpha_{2},...,\alpha_{n},...], \;\alpha_n\in A.$$

For almost all numbers $x\in[\frac{1}{2};1]\equiv I$ the representation by an $A_2$-continued fraction is unique, that is,
$x=[0;\alpha_1,\alpha_2,...,\alpha_n,...]=\Delta^{A_2}_{\alpha_1\alpha_2...\alpha_n...}$, where $\alpha_n=\alpha_n(x)$. Only a countable set dense in $[\frac{1}{2};1]$ admits two distinct representations:
$[0;b_{1}, ...,b_{n}, \frac{1}{2},(\frac{1}{2},1)]=[0;b_{1}, ...,b_{n}, 1,(1,\frac{1}{2})].$
Parentheses indicate periodicity. Such numbers are called $A_2$-binary. Clearly, the set of $A_2$-binary numbers may be neglected in metric problems. The symbolic expression $\Delta^{A_2}_{\alpha_1\alpha_2...\alpha_n...}$ is called the $A_2$-representation of the number $x$.

A cylinder (an $A_2$-cylinder) of rank $m$ with base  $c_1c_2...c_m$ is called the set
\[\Delta^{A_2}_{c_1c_2...c_m}=\{x: x=\Delta^{A_2}_{c_1c_2...c_m\alpha_1\alpha_2...\alpha_n...}, \alpha_n\in A\}.\]

Each cylinder of rank $m$ is the union of two cylinders of rank $(m+1)$, and cylinders of the same rank do not overlap.
The cylinder $\Delta^{A_2}_{c_1c_2...c_m}$  is a segment with endpoints: $[0;c_1,...,c_m,(\frac{1}{2},1)]$ and
$[0;c_1,...,c_m,(1,\frac{1}{2})]$, and its length is given by
\begin{eqnarray}
                       |{\Delta_{c_1c_2...c_m}^{A_2}}| = \frac{1}{(q_{m-1}+q_m)(q_{m-1}+2q_m)}.
            \end{eqnarray}
            
\section{Some Properties of the Left Shift Operator}

Let $T^{-n}(u)\equiv\{x: T^{n}(x)=u\}$ and $T^{-n}([\alpha;\beta])\equiv \{x: T^n(x)=u\in [\alpha;\beta]\}$.
Then it is easy to see that, for $A_2$-continued fractions, the set $E_n(x)\equiv \{u: T^n(u)\leq x\}$ coincides with the set $T^{-n}([0,5;x])$.

We denote  $[x;y]^{*}=[\min\{x,y\};\max\{x,y\}]$, $\eta(\cdot)=2\lambda(\cdot)$.

\begin{lemma}
  The preimage of an interval $(\alpha;\beta)$ under the map $T$ consists of two intervals $(\frac{1}{1+\beta};\frac{1}{1+\alpha})$ and  $(\frac{1}{0,5+\beta};\frac{1}{0,5+\alpha})$, which do not overlap and touch only in the: $\alpha=\frac{1}{2}; \beta=1$.
\end{lemma}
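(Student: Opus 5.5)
The proof will come straight from the explicit form of the shift. For $x=[0;\alpha_1,\alpha_2,\dots]\in I$ we have $x=1/(\alpha_1+T(x))$, so $T(x)=\frac1x-\alpha_1(x)$ with $\alpha_1(x)\in\{\frac12,1\}$. Conversely, a point $x$ satisfies $T(x)=u$ exactly when $x=\frac{1}{a+u}$ for some $a\in A$. Here $a=\alpha_1(x)$, and the rest of the expansion of $x$ is the expansion of $u$. Up to the negligible set of $A_2$-binary points, this gives
\[
T^{-1}(u)=\Bigl\{\tfrac{1}{1+u},\ \tfrac{1}{\frac12+u}\Bigr\},\qquad u\in I .
\]
These two points belong to the cylinders $\Delta^{A_2}_{1}$ and $\Delta^{A_2}_{\frac12}$ respectively.

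Next we take $u$ over the interval $(\alpha;\beta)\subset I$. For fixed $a>0$ the map $g_a(u)=\frac{1}{a+u}$ is continuous and strictly decreasing. It therefore sends $(\alpha;\beta)$ bijectively onto $\bigl(\frac{1}{a+\beta};\frac{1}{a+\alpha}\bigr)$. Taking $a=1$ and $a=\frac12$ gives the two stated intervals, and
\[
T^{-1}((\alpha;\beta))=g_1((\alpha;\beta))\cup g_{1/2}((\alpha;\beta)).
\]
We should check that both pieces really lie in $I$ and that every point of them maps back into $(\alpha;\beta)$. Since $\alpha,\beta\in[\frac12;1]$, we get $g_1(I)=[\frac12;\frac23]$ and $g_{1/2}(I)=[\frac23;1]$. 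These are exactly the rank-one cylinders $\Delta^{A_2}_1$ and $\Delta^{A_2}_{1/2}$. On each cylinder, $T$ is the inverse of $g_a$, which gives the reverse inclusion.

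The only step needing care is the non-overlap and touching claim. The first interval sits inside $[\frac12;\frac23]$ and the second inside $[\frac23;1]$, so they cannot overlap. The right endpoint of the first is $\frac{1}{1+\alpha}\le\frac23$, with equality iff $\alpha=\frac12$. The left endpoint of the second is $\frac{1}{\frac12+\beta}\ge\frac23$, with equality iff $\beta=1$. So their closures meet, at the common point $\frac23$, precisely when $\alpha=\frac12$ and $\beta=1$. That point is the $A_2$-binary number $\frac23=[0;1,(\frac12,1)]=[0;\frac12,(1,\frac12)]$, and its double representation is why the convention on $A_2$-binary numbers is needed. This endpoint bookkeeping is the main, though mild, obstacle.
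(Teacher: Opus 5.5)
Your proof is correct and follows the paper's route: you invert the two branches $u\mapsto\frac{1}{a+u}$, $a\in A$, and then compare endpoints. Your comparison through the cylinder boundary, $\frac{1}{1+\alpha}\le\frac23\le\frac{1}{\frac12+\beta}$, is exactly the one needed. As printed, the paper's difference $\frac{1}{\alpha+0,5}-\frac{1}{\beta+1}$ has $\alpha$ and $\beta$ interchanged and is always positive; the relevant quantity is $\frac{1}{\frac12+\beta}-\frac{1}{1+\alpha}=\frac{\frac12-(\beta-\alpha)}{(\frac12+\beta)(1+\alpha)}$.

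There is one slip in your closing remark, though the argument does not use it. In fact $[0;1,(\frac12,1)]=\frac12$ and $[0;\frac12,(1,\frac12)]=1$, so the two representations of $\frac23$ are $[0;1,(1,\frac12)]=[0;\frac12,(\frac12,1)]$.
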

\begin{proof}
  Since $T(x)=\frac{1}{x}-a_1(x)$ with $a_1\in A$, there exist $u$, $v$ such that
  \[\alpha=\frac{1}{u}-a_1(u),\;\; \beta=\frac{1}{v}-a_1(v),\]
  hence
  $u=\frac{1}{\alpha+a_1(u)}$,
      $v=\frac{1}{\beta+a_1(v)}.$
    Thus
    $T^{-1}((\alpha;\beta))=(u_1;v_1)\cup(u_{0};v_{0}).$
    We consider difference $u_{0}-v_1=\frac{1}{\alpha+0,5}-\frac{1}{\beta+1}=\frac{\beta-\alpha+0,5}{(\alpha+0,5)(\beta+1)}\geq0$.
    Hence the two intervals $(u_1;v_1)$ and $(u_{0};v_{0})$ do not overlap, and they touch only when  $u_{0}=v_1$, i.e. only when $\alpha=0,5; \beta=1$.
\end{proof}
\begin{lemma}
 The preimage of an interval $(u,v)$ under the mapping $T^n$ consists of $2^n$ intervals
  \begin{equation}\label{seg}
    ([0;\alpha_1,...,\alpha_n,u^{-1}],[0;\alpha_1,...,\alpha_n,v^{-1}])^*,
  \end{equation} where
  $(\alpha_1,...,\alpha_n)\in A^n$. These intervals are pairwise disjoin.
\end{lemma}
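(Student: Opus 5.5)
Prove the statement by induction on $n$, with the previous lemma (the case $n=1$) as the base and as the inductive tool. For $n=1$, the previous lemma gives exactly the two intervals $(\frac{1}{1+v};\frac{1}{1+u})$ and $(\frac{1}{0,5+v};\frac{1}{0,5+u})$. Since $\frac{1}{\alpha_1+u}=[0;\alpha_1,u^{-1}]$ (read $[0;\alpha_1,u^{-1}]$ as $1/(\alpha_1+1/u^{-1})$, i.e. the tail value $u$ is substituted for $T(x)$), these are the intervals \eqref{seg} with $\alpha_1\in A$. The symbol $^*$ is needed because $t\mapsto 1/(\alpha+t)$ is decreasing, so the endpoints may come in reversed order.

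For the inductive step, use $T^{-(n+1)}((u,v))=T^{-n}\bigl(T^{-1}((u,v))\bigr)$; equivalently, and more conveniently, $T^{-(n+1)}=T^{-1}\circ T^{-n}$. By the hypothesis, $T^{-n}((u,v))$ is the disjoint union of the $2^n$ intervals $J_{\alpha_1\dots\alpha_n}=([0;\alpha_1,\dots,\alpha_n,u^{-1}],[0;\alpha_1,\dots,\alpha_n,v^{-1}])^*$. Apply the previous lemma to each $J_{\alpha_1\dots\alpha_n}$. Its preimage under $T$ consists of two intervals, obtained by applying $t\mapsto 1/(\alpha_0+t)$, $\alpha_0\in A$, to the endpoints. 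This map sends $[0;\alpha_1,\dots,\alpha_n,w]$ to $[0;\alpha_0,\alpha_1,\dots,\alpha_n,w]$. Relabelling indices, we get exactly the $2^{n+1}$ intervals of the form \eqref{seg}. Continuity and monotonicity of $t\mapsto 1/(\alpha_0+t)$ guarantee that the image of an interval is the interval between the images of its endpoints, whence the $^*$ notation again.

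The main point to check is pairwise disjointness. Two intervals with different first index $\alpha_1$ lie in different rank-one cylinders $\Delta^{A_2}_{1/2}$ and $\Delta^{A_2}_{1}$ (images of $I$ under the two branches), which meet only at an $A_2$-binary point. By the previous lemma they overlap at most in an endpoint, and only when $(u,v)=(\frac12,1)$; for open intervals this gives disjointness. Two intervals with the same $\alpha_1$ are images of two distinct intervals from the inductive hypothesis under the same injective map $t\mapsto 1/(\alpha_1+t)$, so they are disjoint. The only subtlety is the boundary/$A_2$-binary ambiguity and the domain requirement $(u,v)\subset I$, which ensures that every point of the preimage indeed has the prescribed first $n$ digits. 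Handle this by the convention of fixing one representation (measure zero), noting that $[0;\alpha_1,\dots,\alpha_n,t^{-1}]$ with $t\in I$ lies in the cylinder $\Delta^{A_2}_{\alpha_1\dots\alpha_n}$, and using the fact that distinct cylinders of rank $n$ do not overlap.
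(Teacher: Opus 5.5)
Your proof is correct, but it is organized differently from the paper's.

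\textbf{The paper's route.} The paper argues in one shot, without induction. It notes that $T^n$ maps each listed interval onto $(u,v)$, that the interval with index $(\alpha_1,\dots,\alpha_n)$ lies in the rank-$n$ cylinder $\Delta^{A_2}_{\alpha_1\dots\alpha_n}$, and that distinct rank-$n$ cylinders share at most a boundary point. Disjointness follows at once.

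\textbf{Your route.} You generate the $2^{n+1}$ intervals from the $2^n$ ones. You use $T^{-(n+1)}=T^{-1}\circ T^{-n}$, the one-step lemma, and the identity $1/(\alpha_0+[0;\alpha_1,\dots,\alpha_n,w])=[0;\alpha_0,\alpha_1,\dots,\alpha_n,w]$. Disjointness then comes from two facts: each inverse branch $t\mapsto 1/(\alpha_0+t)$ is injective, and the two branches send $(\frac12,1)$ into the disjoint open intervals $(\frac23,1)$ and $(\frac12,\frac23)$.

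\textbf{What each buys.} The paper's route is shorter, but it relies on the rank-$n$ cylinder structure and leaves implicit that the preimage contains no points outside the listed intervals. Yours needs only the rank-one picture and inherits the exact set equality (both inclusions) from the one-step lemma. Consequently your closing appeal to rank-$n$ cylinders, which is essentially the paper's entire proof, is redundant on top of the induction.

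\textbf{One phrase to correct.} In the inductive step, two intervals with different leading digit usually come from different intervals $J$. So the previous lemma does not apply to them as a pair, and the condition ``only when $(u,v)=(\frac12,1)$'' is beside the point. What separates them is their containment in the two open rank-one cylinders.
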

\begin{proof}
It is evident that the image of each interval of the system~\eqref{seg} is the interval $(u,v)$.
  There are exactly $2^n$ such intervals. 
  Every interval of this form~\eqref{seg} is contained in cylinder $\Delta^{A_2}_{\alpha_1...\alpha_n}$ of the $n$-th rank. Since two distinct $A_2$-cylinders of the same rank may share at most one boundary point, the corresponding intervals~\eqref{seg} do not overlap.
\end{proof}
\begin{corollary}
  For every interval $[\alpha;\beta]\subset[0,5;1]$ and every $n\in N$, the following equality holds
  \begin{equation}\label{op}
    T^{-n}([\alpha;\beta])=T^{-n}([0,5;\beta])\setminus T^{-n}([0,5;\alpha)).
      \end{equation}
      \end{corollary}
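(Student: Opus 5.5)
The plan is to reduce the corollary to a purely set-theoretic identity about preimages, and to use the interval structure from the two preceding lemmas only for the remark that the result is a finite union of intervals.

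First, write $[0,5;\beta]$ as the disjoint union
\[
[0,5;\beta]=[0,5;\alpha)\cup[\alpha;\beta],
\]
which is valid because $\frac12\le\alpha\le\beta\le 1$. Next, use the general fact that taking preimages under any map commutes with unions, intersections and complements. By definition, $T^{-n}(B)=\{x: T^n(x)\in B\}$, so
\[
T^{-n}(B_1\cup B_2)=T^{-n}(B_1)\cup T^{-n}(B_2),
\]
and if $B_1\cap B_2=\varnothing$, then $T^{-n}(B_1)\cap T^{-n}(B_2)=\varnothing$. This requires $T^n$ to be a genuine single-valued function on $[\frac12;1]$. That is the point where I invoke the convention from the introduction: each $A_2$-binary point is assigned only one of its two representations. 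With this convention, no point $x$ has $T^n(x)$ lying in both $[0,5;\alpha)$ and $[\alpha;\beta]$.

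Applying these facts with $B_1=[0,5;\alpha)$ and $B_2=[\alpha;\beta]$ gives
\[
T^{-n}([0,5;\beta])=T^{-n}([0,5;\alpha))\sqcup T^{-n}([\alpha;\beta]).
\]
Subtracting $T^{-n}([0,5;\alpha))$ from both sides yields \eqref{op}.

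Finally, I would add a remark linking the identity to the previous lemma. By that lemma, each of the three preimages is a union of $2^n$ intervals of the form \eqref{seg}, one in each cylinder $\Delta^{A_2}_{\alpha_1\dots\alpha_n}$. Inside a fixed cylinder, the map $t\mapsto[0;\alpha_1,\dots,\alpha_n,t^{-1}]$ is monotone. Hence the piece of $T^{-n}([\alpha;\beta])$ in that cylinder is exactly the piece of $T^{-n}([0,5;\beta])$ with the piece of $T^{-n}([0,5;\alpha))$ removed. This gives the same identity cylinder by cylinder and will be convenient for the later measure computations.

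The only real obstacle is the well-definedness of $T^n$ at the countably many $A_2$-binary points and at the endpoints. Once the single-representation convention is fixed, the identity holds exactly. Even without that convention it holds up to a countable set, which has Lebesgue measure zero and so does not affect the metric problem under study.
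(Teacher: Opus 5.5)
Your proof is correct and is essentially the paper's own argument. Both derive the identity set-theoretically from $T^{-n}(B)=\{x: T^n(x)\in B\}$ and $[\alpha;\beta]=[0,5;\beta]\setminus[0,5;\alpha)$; you go through the disjoint decomposition $[0,5;\beta]=[0,5;\alpha)\sqcup[\alpha;\beta]$, while the paper passes the set difference directly through the preimage. Your point that this requires $T^n$ to be single-valued (the one-representation convention at $A_2$-binary points) makes explicit an assumption the paper uses only tacitly, and the cylinder-by-cylinder remark is not needed for the corollary.
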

Indeed, since by the definition of the set
\begin{align*} T^{-n}([\alpha;\beta])&=\{x: T^n(x)=u\in [\alpha;\beta]\}=
\{x: T^n(x)=u\in [0,5;\beta]\setminus [0,5;\alpha)\}=\\
&=\{x: T^n(x)=u\in [0,5;\beta] \wedge u\notin [0,5;\alpha)\}=\\
&=\{x: T^n(x)=u\in [0,5;\beta]\}\setminus \{x: T^{n}=u\in [0,5;\alpha)\}=\\
&=T^{-n}([0,5;\beta])\setminus T^{-n}([0,5;\alpha)).
\end{align*}
\section{Measure of the Set of Preimages of Segments}
 For any segment $[\alpha;\beta]\subseteq[0,5;1]$ we define   $F_n(\alpha;\beta)\equiv2\lambda\left(T^{-n}([\alpha;\beta])\right)$.
\begin{lemma}\label{lema1}
  For any segment $[\alpha;\beta]\subseteq I$ and any $n\in N$ 
  $$
  F_{n+1}(\alpha;\beta)=F_n \left( \frac{1}{0,5+\beta};\frac{1}{0,5+\alpha} \right)+F_n\left(\frac{1}{1+\beta};\frac{1}{1+\alpha}\right).
  $$
\end{lemma}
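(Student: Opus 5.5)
The plan is to write $T^{-(n+1)} = T^{-n}\circ T^{-1}$ and then use additivity of Lebesgue measure over disjoint pieces. First I check the set identity
\[
T^{-(n+1)}([\alpha;\beta])=T^{-n}\bigl(T^{-1}([\alpha;\beta])\bigr).
\]
It follows from $T^{n+1}(x)=T(T^n(x))$: $x$ lies in the left side exactly when $T^n(x)$ is a point $w\in I$ with $T(w)\in[\alpha;\beta]$. Here one must note that $T$ maps $I$ into $I$, since $\frac1x-\alpha_1(x)\in I$ for $x\in I$. One must also use the convention fixing one representation at $A_2$-binary points; these form a countable set and do not affect any measure.

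Next I apply Lemma 1 (the closed-interval version, which differs from the open one only by endpoints). This gives
\[
T^{-1}([\alpha;\beta])=\Bigl[\tfrac{1}{1+\beta};\tfrac{1}{1+\alpha}\Bigr]\cup\Bigl[\tfrac{1}{0,5+\beta};\tfrac{1}{0,5+\alpha}\Bigr].
\]
Both segments lie in $I$, because $\alpha,\beta\in[\tfrac12;1]$ gives $\frac1{1+\beta}\ge\frac12$ and $\frac1{0,5+\alpha}\le 1$. By Lemma 1 the two segments overlap in at most one point, namely $\frac{1}{0,5+\alpha}=\frac{1}{1+\beta}$, which can only happen when $\alpha=\tfrac12$, $\beta=1$. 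Preimages commute with unions, so
\[
T^{-(n+1)}([\alpha;\beta])=T^{-n}\Bigl(\Bigl[\tfrac{1}{1+\beta};\tfrac{1}{1+\alpha}\Bigr]\Bigr)\cup T^{-n}\Bigl(\Bigl[\tfrac{1}{0,5+\beta};\tfrac{1}{0,5+\alpha}\Bigr]\Bigr).
\]

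The step needing care is showing the union is measure-disjoint. The two segments meet in at most one point $p$. The set $T^{-n}(\{p\})$ is finite: by Lemma 2 it has at most $2^n$ points, one in each cylinder of rank $n$, plus possibly finitely many extra points from the double representations at $A_2$-binary numbers. So it has Lebesgue measure zero, and
\[
\lambda\bigl(T^{-(n+1)}([\alpha;\beta])\bigr)=\lambda\Bigl(T^{-n}\bigl[\tfrac{1}{1+\beta};\tfrac{1}{1+\alpha}\bigr]\Bigr)+\lambda\Bigl(T^{-n}\bigl[\tfrac{1}{0,5+\beta};\tfrac{1}{0,5+\alpha}\bigr]\Bigr).
\]
Multiplying by $2$ and recalling $F_n=2\lambda\circ T^{-n}$ gives the stated recursion. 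Measurability is automatic, since by Lemma 2 each $T^{-n}$ of a segment is a finite union of intervals.
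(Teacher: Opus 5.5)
Your argument is correct, but it is organized differently from the paper's. The paper does not use $T^{-(n+1)}=T^{-n}\circ T^{-1}$ or Lemma 1 here. It starts from the explicit sum \eqref{Form1} supplied by Lemma 2 and writes $F_{n+1}(\alpha;\beta)$ as a sum of $\eta$ over the $2^{n+1}$ intervals indexed by $(\alpha_1,\ldots,\alpha_{n+1})\in A^{n+1}$. It then splits this sum by the last digit $\alpha_{n+1}\in\{\frac12,1\}$. Finally, it uses $[0;\alpha_1,\ldots,\alpha_n,\alpha_{n+1},\alpha^{-1}]=[0;\alpha_1,\ldots,\alpha_n,\alpha_{n+1}+\alpha]$ to recognize each half as the same formula for $F_n$ on $\bigl[\frac{1}{\alpha_{n+1}+\beta};\frac{1}{\alpha_{n+1}+\alpha}\bigr]$.

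Both proofs peel off the $(n+1)$-th digit; they differ in where the bookkeeping is done. In the paper, disjointness is inherited from Lemma 2, since the $2^{n+1}$ intervals sit in distinct cylinders of rank $n+1$. So neither the common endpoint of the two branches nor the convention at $A_2$-binary points needs discussion, and the proof is just a regrouping of a finite sum. Your version works at the level of sets, using only $T^{n+1}=T\circ T^{n}$ and the two inverse branches $u\mapsto\frac{1}{1+u}$, $u\mapsto\frac{1}{0,5+u}$. So the recursion does not depend on the explicit continued-fraction form of the preimage intervals. The price is the measure-zero argument for the overlap and the convention, which you supply.

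There are two small slips, neither affecting the conclusion:
\begin{itemize}
\item The only possible common point of the two segments is $\frac{1}{1+\alpha}=\frac{1}{0,5+\beta}=\frac23$. The equation $\frac{1}{0,5+\alpha}=\frac{1}{1+\beta}$ that you wrote has no solution with $\alpha\le\beta$.
\item No ``extra'' preimages arise from double representations. Indeed, $T(x)=u$ forces $x=\frac{1}{\alpha_1(x)+u}$, so $T^{-1}(\{u\})\subseteq\{\frac{1}{1+u},\frac{1}{0,5+u}\}$ and $T^{-n}(\{p\})$ has at most $2^n$ points.
\end{itemize}
Likewise, $T^{-1}([\alpha;\beta])$ differs from the union of the two segments by at most the point $\frac23$. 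This is the only point of $I$ whose first $A_2$-digit depends on the convention, and it is the precise form of your remark about $A_2$-binary points.
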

\begin{proof}
By the previous lemma and additivity of Lebesgue measure
    \begin{equation}\label{Form1}
    F_n(\alpha;\beta)=\sum_{(\alpha_1,...,\alpha_n)\in A^n}\eta\left([[0;\alpha_1,\alpha_2,\ldots,\alpha_n,\alpha^{-1}];
    [0;\alpha_1,\alpha_2,\ldots,\alpha_n,\beta^{-1}]]^* \right).
     \end{equation}
Replacing $(\alpha;\beta)$ in \eqref{Form1} by $\left( \frac{1}{\beta+0,5};\frac{1}{\alpha+0,5}\right)$ and $(\alpha;\beta)$ to $\left( \frac{1}{\beta+1};\frac{1}{\alpha+1}\right)$ we obtain
\begin{align*}
  F_{n+1}(\alpha;\beta)=&\sum_{(\alpha_1,...,\alpha_{n+1})\in A^{n+1}}\eta
 \left([[0;\alpha_1,\ldots,\alpha_n,\alpha_{n+1},\alpha^{-1}];
 [0;\alpha_1,\ldots,\alpha_n,\alpha_{n+1},\beta^{-1}]]^* \right)=\\
  =&\sum_{(\alpha_1,...,\alpha_n)\in A^n}\eta\left(\left
[\left[0;\alpha_1,\ldots,\alpha_n,\frac{1}{2}+\alpha\right];
\left[0;\alpha_1,\ldots,\alpha_n,\frac{1}{2}+\beta\right]\right]^* \right)+\\
+&\sum_{(\alpha_1,...,\alpha_n)\in A^n}\eta\left(\left[\left[0;\alpha_1,\ldots,\alpha_n,1+\alpha\right];\left[0;
\alpha_1,\ldots,\alpha_n,1+\beta\right]\right]^* \right)=\\
=&F_n\left(\frac{1}{0,5+\beta}; \frac{1}{0,5+\alpha}\right)+F_n\left(\frac{1}{1+\beta}; \frac{1}{1+\alpha}\right).
\qedhere
\end{align*}
 \end{proof}
\begin{corollary}
For every interval $[\alpha;\beta]\subseteq I$ and every
$n\in N$, the following holds
  $$
  F_{n}(\alpha;\beta)=F_n(0,5;\beta)-F_n(0,5;\alpha).
  $$ 
\end{corollary}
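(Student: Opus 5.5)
The identity follows from the set equality \eqref{op} in the Corollary to Lemma 2, together with additivity and monotonicity of Lebesgue measure. We multiply through by $2$, since $F_n=2\lambda(T^{-n}(\cdot))$.

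First we check the inclusion $T^{-n}([0,5;\alpha))\subseteq T^{-n}([0,5;\beta])$. It holds because $[0,5;\alpha)\subseteq[0,5;\beta]$ whenever $\alpha\le\beta$, and taking preimages preserves inclusions. For a set difference $B\setminus C$ with $C\subseteq B$ and $C$ of finite measure we have $\lambda(B\setminus C)=\lambda(B)-\lambda(C)$. Applying this to \eqref{op} gives
\[
F_n(\alpha;\beta)=2\lambda\bigl(T^{-n}([0,5;\beta])\bigr)-2\lambda\bigl(T^{-n}([0,5;\alpha))\bigr).
\]
Both sets lie in $[0,5;1]$, so both measures are finite and measurability is not an issue. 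By Lemma 2, each preimage is a finite union of $2^n$ intervals.

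It remains to replace the half-open interval $[0,5;\alpha)$ by the segment $[0,5;\alpha]$. These differ by the single point $\{\alpha\}$. By Lemma 2, $T^{-n}(\{\alpha\})$ consists of at most $2^n$ points, namely $[0;\alpha_1,\dots,\alpha_n,\alpha^{-1}]$ with $(\alpha_1,\dots,\alpha_n)\in A^n$. This is a Lebesgue-null set, so $\lambda(T^{-n}([0,5;\alpha)))=\lambda(T^{-n}([0,5;\alpha]))=\tfrac12F_n(0,5;\alpha)$.

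The only delicate point is the convention on $A_2$-binary points, where $T^n$ is defined by choosing one of the two representations. Changing the choice affects only countably many points, which is again a null set, so it does not alter any of the measures above. The degenerate case $\alpha=0,5$ is consistent with this argument: $T^{-n}([0,5;0,5))=\emptyset$ and $F_n(0,5;0,5)=0$, since the preimage of a point is finite.
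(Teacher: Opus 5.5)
Your argument is correct and follows the route the paper intends. The paper gives no written proof of this corollary, but it rests on the set identity \eqref{op}: apply $\eta=2\lambda$ to it, use subtractivity of measure for the nested sets, and note that $T^{-n}([0,5;\alpha))$ and $T^{-n}([0,5;\alpha])$ differ only by the finite set $T^{-n}(\{\alpha\})$. Your remarks on measurability, on the $A_2$-binary convention, and on the degenerate case $\alpha=0,5$ supply details the paper leaves implicit.
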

\begin{theorem}\label{lema2}
   For every segment $[\alpha;\beta]\subseteq I$ the condition is fulfilled
   $$
   \frac{\frac{32}{3}(\beta-\alpha)}{(\alpha+\frac{5}{6})(\beta+3)}\leq F_n(\alpha;\beta)\leq \frac{\frac{77}{6}(\beta-\alpha)}{(\alpha+3)(\beta+\frac{5}{6})}.
   $$
 \end{theorem}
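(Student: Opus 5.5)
The plan is to prove both bounds by induction on $n$, using the recursion of Lemma~\ref{lema1}. The bounds in the statement have the form
\[
G_{c,a,b}(\alpha;\beta)=\frac{c(\beta-\alpha)}{(\alpha+a)(\beta+b)},
\]
with $(c,a,b)=(\tfrac{32}{3},\tfrac56,3)$ for the lower bound and $(c,a,b)=(\tfrac{77}{6},3,\tfrac56)$ for the upper bound. The idea is to show that the right-hand side of Lemma~\ref{lema1}, applied to $G$ instead of $F_n$, stays on the correct side of $G$ itself. Then the inequality propagates from $F_n$ to $F_{n+1}$. This works because the recursion is monotone: if $G_{\rm low}\le F_n\le G_{\rm up}$ holds for all subsegments of $I$, then substituting into Lemma~\ref{lema1} gives bounds for $F_{n+1}$.

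First I compute how $G$ transforms under the two inverse branches. For $\alpha'=\frac{1}{0,5+\beta}$ and $\beta'=\frac{1}{0,5+\alpha}$ we have $\beta'-\alpha'=\frac{\beta-\alpha}{(0,5+\alpha)(0,5+\beta)}$, and
\[
\alpha'+a=\frac{1+a(0,5+\beta)}{0,5+\beta},\qquad \beta'+b=\frac{1+b(0,5+\alpha)}{0,5+\alpha}.
\]
The factors $(0,5+\alpha)(0,5+\beta)$ cancel, which gives
\[
G\!\left(\tfrac{1}{0,5+\beta};\tfrac{1}{0,5+\alpha}\right)=\frac{c(\beta-\alpha)}{\bigl(1+\tfrac a2+a\beta\bigr)\bigl(1+\tfrac b2+b\alpha\bigr)}.
\]
In the same way, the branch with digit $1$ gives $\frac{c(\beta-\alpha)}{(1+a+a\beta)(1+b+b\alpha)}$. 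After dividing by $c(\beta-\alpha)$, the induction step therefore reduces to two inequalities on the square $[\tfrac12;1]^2$. For the lower bound (where $a=\tfrac56$, $b=3$) it is
\[
\frac{1}{(\tfrac{17}{12}+\tfrac56\beta)(\tfrac52+3\alpha)}+\frac{1}{(\tfrac{11}{6}+\tfrac56\beta)(4+3\alpha)}\ \ge\ \frac{1}{(\alpha+\tfrac56)(\beta+3)}.
\]
For the upper bound, with $a=3$, $b=\tfrac56$, the analogous inequality holds with $\le$.

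Next, I settle the base case. $F_0(\alpha;\beta)=2(\beta-\alpha)$ is probably not inside the bounds at the corner $\alpha=\beta=\tfrac12$, where the lower constant is $\tfrac{32}{14}>2$. So I start the induction at $n=1$. By Lemma~\ref{lema1},
\[
F_1(\alpha;\beta)=2(\beta-\alpha)\left[\frac{1}{(0,5+\alpha)(0,5+\beta)}+\frac{1}{(1+\alpha)(1+\beta)}\right],
\]
and I check directly that this lies between the two $G$'s on $I$. This is again a two-variable rational inequality of the same type.

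The main obstacle is verifying these rational inequalities on $[\tfrac12;1]^2$. The constants $\tfrac56$, $3$, $\tfrac{32}{3}$, $\tfrac{77}{6}$ are evidently tuned so that they hold, possibly only barely. My approach is to clear denominators, which are all positive on the square, to get a polynomial $P(\alpha,\beta)\ge0$ of low degree in each variable. Since $P$ has degree at most two in each variable, I would look at the sign of its coefficients or its monotonicity in one variable. The aim is to reduce to checking the boundary edges, and finally the four corners. If $P$ is not monotone, I would instead bound each factor by its extreme values on $[\tfrac12;1]$, pairing them so that the worst cases are compared consistently.
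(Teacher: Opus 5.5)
There is a genuine gap. Both the base case and the inductive step of your induction are false, so no amount of checking the rational inequalities on $[\frac12;1]^2$ can close the argument.

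\textbf{Base case.} From your formula, $F_1(\alpha;\beta)/(\beta-\alpha)\to 2\bigl(1+\frac49\bigr)=\frac{26}{9}$ as $\alpha,\beta\to\frac12$. The upper bound divided by $\beta-\alpha$ tends to $\frac{77/6}{(7/2)(4/3)}=\frac{11}{4}<\frac{26}{9}$. Similarly, $F_1(\alpha;\beta)/(\beta-\alpha)\to 2\bigl(\frac49+\frac14\bigr)=\frac{25}{18}$ as $\alpha,\beta\to 1$, while the lower bound gives $\frac{32/3}{(11/6)\cdot 4}=\frac{16}{11}>\frac{25}{18}$. So on short segments $[\frac12;\frac12+\varepsilon]$ and $[1-\varepsilon;1]$ the statement is false at $n=1$; it can only hold for $n\ge 2$. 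The paper's proof tacitly needs this restriction too: it uses $q_n/q_{n-1}\in[\frac56;3]$, which fails for $n=1$ because $q_1/q_0=\alpha_1$ may equal $\frac12$.

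\textbf{Inductive step.} Your reduced lower-bound inequality at $\alpha=\beta=1$ reads $\frac{8}{99}+\frac{3}{56}\ge\frac{3}{22}$, i.e.\ $745\ge 756$. The upper-bound one at $\alpha=\beta=\frac12$ reads $\frac{3}{22}+\frac{8}{99}\le\frac{3}{14}$, i.e.\ $602\le 594$. By continuity both fail for genuine segments $\alpha<\beta$ near these corners. So even if you start at $n=2$, where the statement is true, your step cannot propagate it.

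This failure is structural, not a matter of tuning. On the diagonal both bounds have the same profile $g(x)=\frac{1}{(x+\frac56)(x+3)}$. Letting $\beta\to\alpha$, your step would give $Pg\ge g$ (respectively $Pg\le g$) on $I$, where $Pg(x)=\frac{g(1/(\frac12+x))}{(\frac12+x)^2}+\frac{g(1/(1+x))}{(1+x)^2}$. Since $\int_I Pg=\int_I g$, a one-sided inequality on all of $I$ would force $Pg=g$. But $g$ is not reproduced by the recursion; the profile that is reproduced is $\frac{1}{(x+1)(x+2)}$, cf.\ Lemma~\ref{lema6}. Hence $Pg-g$ changes sign: it is negative at $x=1$, which breaks the lower step, and positive at $x=\frac12$, which breaks the upper step.

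\textbf{What is missing.} The paper uses a direct, non-inductive argument.
\begin{itemize}
\item Write $F_n(\alpha;\beta)=\sum_{A^n}\frac{2(\beta-\alpha)}{(q_n+\alpha q_{n-1})(q_n+\beta q_{n-1})}$.
\item Divide each summand by $\eta(\Delta^{A_2}_{\alpha_1\ldots\alpha_n})=\frac{1}{(q_n+\frac12 q_{n-1})(q_n+q_{n-1})}$. The quotient is $2(\beta-\alpha)\,\frac{z+\frac12}{z+\alpha}\cdot\frac{z+1}{z+\beta}$ with $z=q_n/q_{n-1}\in[\frac56;3]$, valid for $n\ge 2$.
\item Each factor is monotone in $z$, so bound it by its values at $z=\frac56$ and $z=3$.
\item Sum over the cylinders using $\sum_{A^n}\eta(\Delta^{A_2}_{\alpha_1\ldots\alpha_n})=1$.
\end{itemize}
This is exactly where the asymmetric shifts $\frac56$ and $3$ in your $G$ come from: they are the extreme values of $q_n/q_{n-1}$, not constants tuned to survive the recursion.
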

\begin{proof}
    Consider the point $x=[0;\alpha_1,\alpha_2,...,\alpha_n]=\frac{p_n(x)}{q_n(x)}$, $\alpha_n\in A$, and the points
 $x_1=[0;\alpha_1,...,\alpha_n,\frac{1}{\alpha}]=\frac{p_{n+1}(x_1)}{q_{n+1}(x_1)}=\frac{\frac{1}{\alpha}p_n(x)+p_{n-1}(x)}
 {\frac{1}{\alpha}q_n(x)+q_{n-1}(x)}$ and
 $x_2=[0;\alpha_1,...,\alpha_n,\frac{1}{\beta}]=\frac{p_{n+1}(x_2)}{q_{n+1}(x_2)}=\frac{\frac{1}{\beta}p_n(x)+p_{n-1}(x)}
 {\frac{1}{\beta}q_n(x)+q_{n-1}(x)}$. Then
 \begin{align*}
   \eta([x_1,x_2]^*)=&2\left|\frac{p_{n+1}(x_1)}{q_{n+1}(x_1)}-
 \frac{p_{n+1}(x_2)}{q_{n+1}(x_2)}\right|=
   2\left|\frac{\frac{1}{\alpha}p_{n}(x)+p_{n-1}(x)}{\frac{1}{\alpha}q_{n}(x)+q_{n-1}(x)}-
   \frac{\frac{1}{\beta}p_{n}(x)+p_{n-1}(x)}{\frac{1}{\beta}q_{n}(x)+q_{n-1}(x)}\right|=\\
   =&2\left|\frac{(\frac{1}{\alpha}-\frac{1}{\beta})(p_n(x)q_{n-1}(x)-p_{n-1}(x)q_{n}(x))}
   {(\frac{1}{\alpha}q_{n}(x)+q_{n-1}(x))(\frac{1}{\beta}q_{n}(x)+q_{n-1}(x))}\right|=\\
   =&2\left|\frac{(\beta-\alpha)\cdot (-1)^{n+1}}
   {(q_{n}(x)+\alpha q_{n-1}(x))(q_{n}(x)+\beta q_{n-1}(x))}\right|=\\
   =&\frac{2(\beta-\alpha)}
   {(q_{n}(x)+\alpha q_{n-1}(x))( q_{n}(x)+\beta q_{n-1}(x))},
 \end{align*}
     since, as is well known,
   $p_n(x)\cdot q_{n-1}(x)-q_n(x)\cdot p_{n-1}(x)=(-1)^{n+1}.$
  It is well known~\cite{chin,PGLR_2023}, that
   $$
   \frac{q_n(x)}{q_{n-1}(x)}=
   [\alpha_n;\alpha_{n-1},\alpha_{n-2},\ldots,\alpha_1] \in[\frac{5}{6};3].
   $$

   For fixed $\gamma \in I$ in the segment $[\frac{5}{6};3]$ define
    $$h_{1}(z)=\frac{z+0,5}{z+\gamma},\;\;  h_{2}(z)=\frac{z+1}{z+\gamma}.$$
   Since
   $$h_{1}^\prime(z)=\frac{\gamma-0,5}{(z+\gamma)^2}\geq 0\;\; \forall z\in [\frac{5}{6};3]  \Rightarrow   h_{1}(\frac{q_{n}}{q_{n-1}})\in\left[\frac{ \frac{4}{3}}{\gamma+\frac{5}{6}};\frac{3,5}{\gamma+3}\right],$$
    $$h_{2}^\prime(z)=\frac{\gamma-1}{(z+\gamma)^2}\leq 0\; \forall z\in [\frac{5}{6};3] \Rightarrow   h_{2}(\frac{q_{n}}{q_{n-1}})\in\left[\frac{4}{\gamma+3};\frac{\frac{11}{6}}{\gamma+\frac{5}{6}}\right].$$
   It is clear that
    $$\sum_{(\alpha_1,...,\alpha_n)\in A^n}\frac{1}{(q_n+0,5q_{n-1})(q_n+q_{n-1})}=\sum_{(\alpha_1,...,\alpha_n)\in A^n}\eta(\Delta^{A_2}_{\alpha_1\alpha_2...\alpha_n})=1.$$
   Then we have
   $$
   \frac{1}{q_n+\gamma q_{n-1}}: \frac{1}{q_n+0,5 q_{n-1}}=\frac{0,5+\frac{q_{n}}{q_{n-1}}}{\gamma+\frac{q_{n}}{q_{n-1}}}=h_{1}\left(\frac{q_{n}}{q_{n-1}}\right)
    \in\left[\frac{ \frac{4}{3}}{\gamma+\frac{5}{6}};\frac{3,5}{\gamma+3}\right],
   $$
   $$
   \frac{1}{q_n+\gamma q_{n-1}}:\frac{1}{q_n+q_{n-1}}=\frac{1+\frac{q_{n}}{q_{n-1}}}{\gamma+\frac{q_{n}}{q_{n-1}}}=h_{2}\left(\frac{q_{n}}{q_{n-1}}\right)
   \in\left[\frac{4}{\gamma+3};\frac{\frac{11}{6}}{\gamma+\frac{5}{6}}\right].
   $$
   Where from
   \[
   F_n(\alpha;\beta)\geq 2\cdot\frac{ \frac{4}{3}(\beta-\alpha)}{\alpha+\frac{5}{6}}\cdot\sum_{\stackrel{\alpha_i\in A}{i=\overline{1,n}}}\frac{1}{(q_n+0,5q_{n-1})(q_n+q_{n-1})}\cdot\frac{4}{\beta+3}=\frac{\frac{32}{3}(\beta-\alpha)}{(\alpha+\frac{5}{6})(\beta+3)}.
\]
     Similarly
    \[
   F_n(\alpha;\beta)\leq 2\cdot\frac{3,5(\beta-\alpha)}{\alpha+3}\cdot\sum_{\stackrel{\alpha_i\in A}{i=\overline{1,n}}}\frac{1}{(q_n+0,5q_{n-1})(q_n+q_{n-1})}\cdot\frac{\frac{11}{6}}{\beta+\frac{5}{6}}=
   \frac{\frac{77}{6}(\beta-\alpha)}{(\alpha+3)(\beta+\frac{5}{6})}.
   \qedhere\]
 \end{proof}

\section{Differential Properties of the Function $f_{n}(x)=2\lambda(T^{-n}([0,5;x]))$}
Let $\eta(\cdot)=2\lambda(\cdot)$ and $f_{n}(x)=\eta(T^{-n}([0,5;x]))$.
\begin{remark}\label{rem}
By the definition of the functions $f_n(x)$ and $F_n$, for arbitrary $x\in I$ and $n\in N$, the following holds
    \[f_n(x)=\eta(T^{-n}([0,5;x]))=F_n(0,5;x).\]
 \end{remark}
 \begin{lemma}\label{lemma:basic}
  For every natural number $n>1$ and $x\in I$, the following equality holds
      \begin{equation}\label{frec}
     f_{n+1}(x)=f_n(1)-f_n\left(\frac{1}{0.5+x}\right)+f_n\left(\frac{2}{3}\right)-f_n\left(\frac{1}{1+x}\right).
   \end{equation}
 \end{lemma}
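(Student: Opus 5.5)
The plan is to combine the recursion of Lemma~\ref{lema1}, applied to the segment $[0,5;x]$, with Remark~\ref{rem} and the Corollary expressing $F_n(\alpha;\beta)$ as a difference of values at $0,5$.

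First, by Remark~\ref{rem}, $f_{n+1}(x)=F_{n+1}(0,5;x)$. Applying Lemma~\ref{lema1} with $\alpha=0,5$ and $\beta=x$ gives
\[
f_{n+1}(x)=F_n\left(\frac{1}{0,5+x};\frac{1}{0,5+0,5}\right)+F_n\left(\frac{1}{1+x};\frac{1}{1+0,5}\right)
=F_n\left(\frac{1}{0,5+x};1\right)+F_n\left(\frac{1}{1+x};\frac{2}{3}\right).
\]
Before going on, I check that both segments lie in $I=[0,5;1]$, since otherwise the Corollary does not apply. For $x\in[0,5;1]$ we have $\frac{1}{0,5+x}\in[\frac{2}{3};1]$ and $\frac{1}{1+x}\in[\frac12;\frac23]$. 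So $[\frac{1}{0,5+x};1]$ and $[\frac{1}{1+x};\frac23]$ are genuine subsegments of $I$, with left endpoint at most the right one.

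Next, the Corollary $F_n(\alpha;\beta)=F_n(0,5;\beta)-F_n(0,5;\alpha)$ turns each term into a difference:
\[
F_n\left(\frac{1}{0,5+x};1\right)=f_n(1)-f_n\left(\frac{1}{0,5+x}\right),\qquad
F_n\left(\frac{1}{1+x};\frac23\right)=f_n\left(\frac23\right)-f_n\left(\frac{1}{1+x}\right).
\]
Here Remark~\ref{rem} is used again to rewrite $F_n(0,5;\cdot)$ as $f_n(\cdot)$. Summing the two lines gives \eqref{frec}.

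The only delicate point is the Corollary itself, which comes from \eqref{op}. That identity uses a half-open interval $[0,5;\alpha)$, whereas $f_n$ is defined via closed segments. Since $T^{-n}$ of a single point is a finite set (at most $2^n$ points, by the preimage lemma), it has Lebesgue measure zero. Hence replacing $[0,5;\alpha)$ by $[0,5;\alpha]$ does not change $\eta$, and the subtraction is legitimate.
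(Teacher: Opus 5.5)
Your proof is correct and follows the paper's route: apply Lemma~\ref{lema1} with $\alpha=0,5$, $\beta=x$, then split each of the two resulting terms with the corollary $F_n(\alpha;\beta)=F_n(0,5;\beta)-F_n(0,5;\alpha)$, and rewrite via Remark~\ref{rem}. You also check that $[\frac{1}{0,5+x};1]$ and $[\frac{1}{1+x};\frac23]$ lie in $I$, and that the half-open interval in \eqref{op} can be replaced by a closed one because $T^{-n}$ of a point has measure zero; the paper leaves both of these details implicit.
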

 \begin{proof}
 According to Remark~\ref{rem}, Lemma~\ref{lema1}, and its corollary, the following equalities hold
    \begin{align*}
     f_{n+1}(x)=&F_{n+1}(0,5;x)=F_{n}\left(\frac{1}{0,5+x};1\right)+F_n\left(\frac{1}{1+x};\frac{2}{3}\right)=\\
     =&F_n(0,5;1)-F_n\left(0,5;\frac{1}{0,5+x}\right)+F_n\left(0,5;\frac{2}{3}\right)-F_n\left(0,5;\frac{1}{1+x}\right)=\\
     =&f_n(1)-f_n\left(\frac{1}{0,5+x}\right)+f_n\left(\frac{2}{3}\right)-f_n\left(\frac{1}{1+x}\right).\qedhere
   \end{align*}
 \end{proof}
\begin{lemma}\label{lema3}
  For every $x\in I$ and every natural number $n$, the following inequalities hold:
  $$
  \frac{\frac{60}{7}}{(x+1)(x+2)}\leq f_n^\prime(x)\leq \frac{\frac{21}{2}}{(x+1)(x+2)}.
  $$
\end{lemma}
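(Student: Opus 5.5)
The plan is to avoid induction through the recurrence of Lemma~\ref{lemma:basic} and obtain the bounds directly from Theorem~\ref{lema2}. The reason is the base case. The recurrence does preserve the class of bounds $c/((x+1)(x+2))$ exactly: if $g(x)=c/((x+1)(x+2))$, then
\[
\frac{g\left(\frac{1}{0,5+x}\right)}{(0,5+x)^2}+\frac{g\left(\frac{1}{1+x}\right)}{(1+x)^2}=\frac{c}{(2x+3)(x+1)}+\frac{c}{(2x+3)(x+2)}=\frac{c}{(x+1)(x+2)}.
\]
However, a direct check shows that $f_1'(x)=\frac{2}{(x+0,5)^2}+\frac{2}{(x+1)^2}$ violates the upper bound $\frac{21}{2}$ at $x=\frac12$. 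So induction from $f_1$ with these constants fails.

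First, differentiability. By \eqref{Form1} with $\alpha=0,5$, $\beta=x$, the function $f_n(x)$ is a finite sum over $(\alpha_1,\dots,\alpha_n)\in A^n$ of $\pm 2\bigl([0;\alpha_1,\dots,\alpha_n,x^{-1}]-[0;\alpha_1,\dots,\alpha_n,2]\bigr)$. Each term is a Möbius function of $x$ whose denominator $q_n+xq_{n-1}$ does not vanish on $I$. Hence $f_n$ is differentiable on $I$, with one-sided derivatives at the endpoints, and
\[
f_n'(x)=\lim_{h\to 0}\frac{F_n(x;x+h)}{h}
\]
by the corollary $F_n(\alpha;\beta)=f_n(\beta)-f_n(\alpha)$ (read with the segment $[x;x+h]^*$ when $h<0$).

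Second, apply Theorem~\ref{lema2} to the segment $[x;x+h]^*$, divide by $|h|$ and let $h\to 0$. This gives
\[
\frac{\frac{32}{3}}{(x+\frac56)(x+3)}\le f_n'(x)\le \frac{\frac{77}{6}}{(x+3)(x+\frac56)}.
\]
It remains to compare $(x+\frac56)(x+3)$ with $(x+1)(x+2)$ on $I$. Set $r(x)=\frac{(x+1)(x+2)}{(x+\frac56)(x+3)}$. The numerator of $r'(x)$ is $\frac56x^2+x-\frac16$, which is positive on $I$ (already $\frac{5}{24}+\frac12-\frac16>0$ at $x=\frac12$). Hence $r$ increases on $I$, and
\[
r(x)\in\left[r\left(\tfrac12\right);r(1)\right]=\left[\tfrac{45}{56};\tfrac{9}{11}\right].
\]

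Third, write $f_n'(x)\cdot(x+1)(x+2)$ as the bound from the second step multiplied by $r(x)$. This gives the lower constant $\frac{32}{3}\cdot\frac{45}{56}=\frac{60}{7}$ and the upper constant $\frac{77}{6}\cdot\frac{9}{11}=\frac{21}{2}$, exactly the constants in the statement. This confirms that the lemma is a pointwise rescaling of Theorem~\ref{lema2}.

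The main obstacle is justifying the passage from the segment inequality of Theorem~\ref{lema2}, which is not itself in differential form, to the pointwise derivative bound. I would handle this via the finite Möbius-sum representation above, which guarantees that the limit of the difference quotient exists. The two-sided limit then inherits both bounds, because the bounding expressions are continuous in $\alpha$ and $\beta$.
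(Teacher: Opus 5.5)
Your route is the paper's route. You pass to the limit in the segment bounds of Theorem~\ref{lema2} to get $\frac{32/3}{(x+\frac56)(x+3)}\le f_n'(x)\le\frac{77/6}{(x+3)(x+\frac56)}$. You then rescale by the increasing ratio $r$, with $r(\tfrac12)=\tfrac{45}{56}$ and $r(1)=\tfrac{9}{11}$. Your arithmetic is all correct, including the numerator $\frac56x^2+x-\frac16$ of $r'$.

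The gap is in how you read your own base-case check. You found $f_1'(\tfrac12)=2+\tfrac89=\tfrac{26}{9}$, which exceeds $\frac{21/2}{(3/2)(5/2)}=\tfrac{14}{5}$. Likewise $f_1'(1)=\tfrac{25}{18}<\tfrac{10}{7}=\frac{60/7}{2\cdot 3}$. These are counterexamples to the lemma itself at $n=1$, not merely an obstacle to induction. So no argument can prove the statement for every natural $n$. Yours appears to cover $n=1$ only because it invokes Theorem~\ref{lema2} at $n=1$, where that theorem is also false: it would give $f_1'(\tfrac12)\le\frac{77/6}{(7/2)(4/3)}=\tfrac{11}{4}<\tfrac{26}{9}$. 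Your proposal therefore contradicts its own computation.

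The failing step sits inside the proof of Theorem~\ref{lema2}, which uses $q_n/q_{n-1}\in[\tfrac56;3]$. But $q_1/q_0=\alpha_1\in\{\tfrac12,1\}$, so the range only holds from $n=2$ on:
\begin{itemize}
\item $q_2/q_1=\alpha_2+1/\alpha_1\in[\tfrac32;3]$;
\item if $q_{n-1}/q_{n-2}\in[\tfrac56;3]$, then $q_n/q_{n-1}=\alpha_n+q_{n-2}/q_{n-1}\in[\tfrac56;\tfrac{11}{5}]$.
\end{itemize}
With this check added, your argument proves the lemma for all $n\ge 2$, and you must state that restriction explicitly. The paper's proof has exactly the same defect. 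Once $n=2$ is secured, the invariance identity you verified for $c/((x+1)(x+2))$ also propagates the bounds upward, so the inductive route would then work as well.
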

 \begin{proof}
Since for each $x \in I$ the equality is satisfied
$$f_{n}(x)=F_n(0,5;x)=\sum_{(\alpha_1,...,\alpha_n)\in A^n}\frac{2x-1}{(q_{n}+0,5q_{n-1})(q_{n}+xq_{n-1})}, $$
then $f_{n}^{'}(x)$ exist for every $x \in I$ and $n\in N$.

  Using Lemma \ref{lema2}, for each  $x \in I$ we have
  $$
  f_{n}^\prime(x)=\lim_{\stackrel{x\in [\alpha;\beta]}{(\beta-\alpha)\rightarrow 0}}\frac{F_n(\alpha;\beta)}{\beta-\alpha} \geq
   \lim_{\stackrel{x\in [\alpha;\beta]}{(\beta-\alpha)\rightarrow 0}}\frac{\frac{32}{3}(\beta-\alpha)}{(\alpha+\frac{5}{6})(\beta+3)(\beta-\alpha)} \geq \frac{\frac{32}{3}}{(x+\frac{5}{6})(x+3)}.
   $$
Similarly,
$$
  f_{n}^\prime(x)\leq\frac{\frac{77}{6}}{(x+\frac{5}{6})(x+3)} \;\; \forall x\in I.
  $$
Consider the function
$$r(x)=\frac{(x+1)(x+2)}{(x+\frac{5}{6})(x+3)}.$$
Since
$$r^\prime(x)=\frac{30 x^2+36x-6}{(x+\frac{5}{6})(x+3)}>0 \;\;\; \forall x \in I, $$
the function $r$ is increasing on $I$. Hence,
$$
  f_{n}^\prime(x)\geq \frac{\frac{32}{3}}{(x+\frac{5}{6})(x+3)} \geq \frac{\frac{32}{3}r(0,5)}{(x+1)(x+2)}=\frac{\frac{60}{7}}{(x+1)(x+2)}.$$
Analogously,
\[
  f_{n}^\prime(x)\leq \frac{\frac{77}{6}}{(x+\frac{5}{6})(x+3)} \leq \frac{\frac{77}{6}r(1)}{(x+1)(x+2)}=\frac{\frac{21}{2}}{(x+1)(x+2)}.\qedhere\]
\end{proof}
\begin{lemma}\label{lema4}
 For every natural number $n$ and every $x\in I$ 
 $$\left| f_n^{\prime\prime}(x)\right|\leq \frac{33}{8}.$$
\end{lemma}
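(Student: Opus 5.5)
The plan is to start from the explicit formula for $f_n$ given in the proof of Lemma~\ref{lema3},
$$f_{n}(x)=\sum_{(\alpha_1,\ldots,\alpha_n)\in A^n}\frac{2x-1}{(q_{n}+0,5q_{n-1})(q_{n}+xq_{n-1})},$$
and differentiate it term by term twice. Since $q_n,q_{n-1}$ do not depend on $x$, a direct computation gives
$$\frac{d}{dx}\,\frac{2x-1}{q_n+xq_{n-1}}=\frac{2q_n+q_{n-1}}{(q_n+xq_{n-1})^2}.$$
Hence each summand of $f_n'$ equals $2/(q_n+xq_{n-1})^2$. Differentiating once more, each summand of $f_n''$ equals $-4q_{n-1}/(q_n+xq_{n-1})^3$. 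So $f_n''(x)\le 0$ and
$$|f_n''(x)|=\sum_{(\alpha_1,\ldots,\alpha_n)\in A^n}\frac{4q_{n-1}}{(q_n+xq_{n-1})^3}.$$

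Next I would normalise each term against the cylinder weights, exactly as in the proof of Theorem~\ref{lema2}. I write
$$\frac{4q_{n-1}}{(q_n+xq_{n-1})^3}=\frac{1}{(q_n+0,5q_{n-1})(q_n+q_{n-1})}\cdot g_x\!\left(\frac{q_n}{q_{n-1}}\right),\qquad g_x(z)=\frac{4(z+0,5)(z+1)}{(z+x)^3}.$$
I then use the identity stated there, $\sum_{A^n}\frac{1}{(q_n+0,5q_{n-1})(q_n+q_{n-1})}=1$. This yields $|f_n''(x)|\le \max g_x(z)$, with the maximum taken over $z\in[\frac56;3]$ and $x\in I$, since it is known that $q_n/q_{n-1}\in[\frac56;3]$.

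The remaining, and only substantive, step is this elementary two-variable maximisation. First, $g_x(z)$ is decreasing in $x$, so the worst case is $x=\frac12$, where $g_{1/2}(z)=4(z+1)/(z+\frac12)^2$. Second, this function is decreasing in $z$ for $z>0$: its derivative has the sign of $(z+\frac12)-2(z+1)<0$. So the maximum is attained at $z=\frac56$. There the value is
$$4\cdot\frac{11}{6}\cdot\frac{9}{16}=\frac{33}{8},$$
which is exactly the claimed bound.

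The main obstacle is not conceptual. It is keeping the normalisation constants consistent: the factor $2$ coming from $\eta=2\lambda$, the identity for the sum of the weights, and the index convention for $q_n/q_{n-1}$. I would follow the paper's conventions from Theorem~\ref{lema2} verbatim so that the constant $\frac{33}{8}$ comes out as stated.
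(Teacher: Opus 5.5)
Your argument is the paper's proof almost line for line. You use the same formula $f_n''(x)=-\sum_{A^n}4q_{n-1}(q_n+xq_{n-1})^{-3}$ and the same normalisation against the cylinder weights $\frac{1}{(q_n+\frac12 q_{n-1})(q_n+q_{n-1})}$, which sum to $1$. You also use the same function $4(z+1)/(z+\frac12)^2$, decreasing and maximised at $z=\frac56$. Getting $f_n'$ from the closed form and folding the monotonicity in $x$ into $g_x$ are cosmetic differences.

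The argument has a genuine gap, which the paper shares. The input $q_n/q_{n-1}\in[\frac56;3]$ is false for $n=1$: there $q_1/q_0=\alpha_1\in\{\frac12,1\}$, and $g_{1/2}(\frac12)=6>\frac{33}{8}$. This is not just a lossy estimate; the statement itself fails at $n=1$. Directly, $T^{-1}([\frac12;x])=[\frac{1}{x+1};\frac23]\cup[\frac{1}{x+1/2};1]$, so
\[
f_1(x)=2\Bigl(\frac23-\frac{1}{x+1}\Bigr)+2\Bigl(1-\frac{1}{x+\frac12}\Bigr),\qquad f_1''(x)=-\frac{4}{(x+1)^3}-\frac{4}{(x+\frac12)^3},
\]
and therefore $|f_1''(\frac12)|=\frac{32}{27}+4=\frac{140}{27}>\frac{33}{8}$.

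What your argument actually proves is the bound for $n\ge2$ (and trivially $f_0''=0$). To make that complete, prove the range rather than cite it. From $q_n/q_{n-1}=\alpha_n+(q_{n-1}/q_{n-2})^{-1}$ and $q_1/q_0\in\{\frac12,1\}$, induction gives $q_{n-1}/q_{n-2}\in[\frac12;3]$ for every $n\ge2$. Hence $q_n/q_{n-1}\in[\frac12+\frac13;1+2]=[\frac56;3]$.

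You named the index convention for $q_n/q_{n-1}$ as the main risk, then deferred to the paper's conventions. That is exactly where a check was needed, and following the paper verbatim does not rescue $n=1$. The restriction to $n\ge2$ costs nothing where the lemma is used: inside Lemma~\ref{lema7}, in the final theorem, it is applied only to $f_m$ with $m\ge t\ge2$.
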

\begin{proof}
  Clearly, $f_0(x)=2x-1$ for all $x\in I$. From Lemma \ref{lema3}, for each natural $n$ and each $x\in I$ 
  $$
   f_n^\prime(x)=\lim_{\stackrel{x\in [\alpha;\beta]}{(\beta-\alpha)\rightarrow 0}}\frac{1}{\beta-\alpha}\sum_{\stackrel{\alpha_i\in A}{i=\overline{1,n}}}\frac{2(\beta-\alpha)}{(q_{n}+\alpha q_{n-1})(q_{n}+\beta q_{n-1})}=
    \sum_{\stackrel{\alpha_i\in A}{i=\overline{1,n}}}\frac{2}{(q_n+xq_{n-1})^2}.
  $$
Thus,
   $$
    f_n^{\prime\prime}(x)=\sum_{(\alpha_1,...,\alpha_n)\in A^n}\frac{-4q_{n-1}}{(q_n+xq_{n-1})^3}.
  $$
  Hence,
  $$
    \left| f_n^{\prime\prime}(x)\right|\leq \sum_{(\alpha_1,...,\alpha_n)\in A^n}\frac{4q_{n-1}}{(q_n+xq_{n-1})^3}\leq \sum_{(\alpha_1,...,\alpha_n)\in A^n}\frac{4q_{n-1}}{(q_n+0,5q_{n-1})^3} \;\;\; \forall x\in I.
  $$
For the function $h_{3}(z)=\frac{4(z+1)}{(z+0,5)^2},$ defined on the interval $[\frac{5}{6};3]$,  we have
   $$\frac{4q_{n-1}}{(q_n+0,5q_{n-1})^3}: \frac{1}{(q_n+0,5q_{n-1})(q_n+q_{n-1})}=$$
   $$=\frac{4q_{n-1}(q_n+q_{n-1})}{(q_n+0,5q_{n-1})^2}=
  \frac{4\left(\frac{q_n}{q_{n-1}}+1\right)}{\left(\frac{q_n}{q_{n-1}}+0,5\right)^2}=h_{3}(\frac{q_n}{q_{n-1}}) \leq h_{3}(\frac{5}{6})= \frac{33}{8}.$$
   Since $\frac{q_n}{q_{n-1}}\in[\frac{5}{6};3]$ and
   $$h_{3}^\prime(z)=4\left(\frac{(z+0,5)^2-2(z+1)(z+0,5)}{(z+0,5)^2}\right)=\frac{-4(z+1,5)}{(z+0,5)^3}<0 \;\; \forall z\in[\frac{5}{6};3],$$
   we obtain
   \[
    \left| f_n^{\prime\prime}(x)\right|\leq \frac{33}{8}\sum_{(\alpha_1,...,\alpha_n)\in A^n}\frac{1}{(q_n+0,5q_{n-1})(q_n+q_{n-1})}=\frac{33}{8} \;\;  \forall x\in I.\qedhere \]
\end{proof}
\begin{lemma}\label{lema5}
Let $(w_{n}(x))$ be a sequence of functions satisfying, for every natural $n$ and every $x \in I$,
$$
    w_{n+1}(x)=w_{n}\left( \frac{1}{0,5+x}\right)\cdot\frac{1}{(0,5+x)^2}+w_{n}\left( \frac{1}{1+x}\right)\cdot\frac{1}{(1+x)^2}.
$$
Then for all natural $n$, $m$ and all $x\in I$ 
  $$
  w_{n+m}(x)=\sum_{(\alpha_1,...,\alpha_n)\in A^n} w_{m}\left(\frac{p_n+xp_{n-1}}{q_n+xq_{n-1}} \right)\cdot\frac{1}{(q_n+xq_{n-1})^2}.
  $$
  \end{lemma}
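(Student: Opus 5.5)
Induction on $n$ with $m$ fixed (arbitrary). Here $p_k,q_k$ are the numerators and denominators of the convergents of $[0;\alpha_1,\dots,\alpha_n]$, with $p_0=0$, $q_0=1$, $p_{-1}=1$, $q_{-1}=0$. The recurrences are $q_k=\alpha_kq_{k-1}+q_{k-2}$ and $p_k=\alpha_kp_{k-1}+p_{k-2}$.

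\emph{Base case $n=1$.} We have $p_1=1$, $q_1=\alpha_1$, $p_0=0$, $q_0=1$, so
\[\frac{p_1+xp_0}{q_1+xq_0}=\frac{1}{\alpha_1+x},\qquad \frac{1}{(q_1+xq_0)^2}=\frac{1}{(\alpha_1+x)^2}.\]
Summing over $\alpha_1\in\{\frac12,1\}$ gives exactly the hypothesised recursion with $n$ replaced by $m$. So the claim for $n=1$ is the hypothesis itself.

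\emph{Inductive step $n\to n+1$.} Assume the formula for $n$ and all $m$. Apply it with $m+1$ in place of $m$:
\[w_{n+1+m}(x)=\sum_{A^n} w_{m+1}(y)\,\frac{1}{(q_n+xq_{n-1})^2},\qquad y=\frac{p_n+xp_{n-1}}{q_n+xq_{n-1}}.\]
Next expand $w_{m+1}(y)$ by the hypothesis:
\[w_{m+1}(y)=\sum_{a\in A} w_m\!\left(\frac{1}{a+y}\right)\frac{1}{(a+y)^2}.\]
It remains to identify each term with the corresponding term for the word $(\alpha_1,\dots,\alpha_n,a)$. Alternatively one can do the induction by peeling off the first letter instead of the last; the algebra is the same kind of Möbius composition. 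Peeling the last letter is the cleaner choice here.

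The key algebra uses $y=[0;\alpha_1,\dots,\alpha_n+x]$, i.e.\ $y$ is the value of the $n$-th convergent with $\alpha_n$ replaced by $\alpha_n+x$. So $1/(a+y)$ is the value of $[0;a,\alpha_1,\dots,\alpha_n+x]$. This prepends the new letter, which suggests peeling the first letter instead. So the induction will be organised as follows. Write
\[w_{m+n+1}(x)=\sum_{a}w_{m+n}\!\left(\frac{1}{a+x}\right)\frac{1}{(a+x)^2}\]
using the hypothesis at index $m+n$. Then apply the induction hypothesis at the point $x'=1/(a+x)$, which lies in $I$ since $a+x\in[1,2]$. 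This gives
\[\frac{p_n+x'p_{n-1}}{q_n+x'q_{n-1}}=\frac{(a+x)p_n+p_{n-1}}{(a+x)q_n+q_{n-1}}=[0;\alpha_1,\dots,\alpha_n,a+x]=\frac{p_{n+1}+xp_n}{q_{n+1}+xq_n},\]
where $p_{n+1},q_{n+1}$ correspond to appending $\alpha_{n+1}=a$. For the weights,
\[\frac{1}{(q_n+x'q_{n-1})^2}\cdot\frac{1}{(a+x)^2}=\frac{1}{\bigl((a+x)q_n+q_{n-1}\bigr)^2}=\frac{1}{(q_{n+1}+xq_n)^2}.\]
Summing over $(\alpha_1,\dots,\alpha_n)\in A^n$ and $a\in A$ then gives the sum over $A^{n+1}$.

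The main obstacle is bookkeeping rather than difficulty: the hypothesis must be used on the outermost index $m+n$ at the shifted point, which appends the letter at the end. Above all, the weights must collapse correctly; this relies only on $q_{n+1}=aq_n+q_{n-1}$ and the analogous identity for $p$.
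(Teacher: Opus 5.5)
Your final organized induction (apply the recursion at the outer index $m+n$, then the inductive hypothesis at $x'=1/(a+x)\in I$, and collapse via $p_{n+1}=ap_n+p_{n-1}$, $q_{n+1}=aq_n+q_{n-1}$) is correct and is exactly the paper's argument, which likewise appends the new letter $\alpha_{n+1}=a$ at the end; your check that $x'\in I$ also fills a point the paper leaves implicit. In a write-up, drop the contradictory remarks about which letter is being peeled, and drop the abandoned first route through $w_{m+1}(y)$, whose proposed matching with the word $(\alpha_1,\dots,\alpha_n,a)$ is not right because that route actually produces the word $(a,\alpha_1,\dots,\alpha_n)$.
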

  \begin{proof}
    The proof proceeds by induction on $n$. Since $ p_0=0, q_0=1, p_1=1,q_1=a_1$, the statement is true  when $n=1$.
    Let us assume the truth of the statement  when $n=k,$ i.e., the equality
    $$
    w_{k+m}(x)=\sum_{(\alpha_1,...,\alpha_k)\in A^k} w_{m}(x)\left(\frac{p_k+xp_{k-1}}{q_k+xq_{k-1}} \right)\cdot\frac{1}{(q_k+xq_{k-1})^2} \;\; \forall x\in I.
    $$
    Then
    \begin{align*}
      w_{k+1+m}(x)&=w_{k+m}\left( \frac{1}{0,5+x}\right)\cdot\frac{1}{(0,5+x)^2}+w_{k+m}\left( \frac{1}{1+x}\right)\cdot\frac{1}{(1+x)^2}=\\
      &=\frac{1}{(0,5+x)^2}\sum_{\stackrel{\alpha_i\in A}{i=\overline{1,k}}} w_{m} \left(\frac{p_k+\frac{1}{0,5+x}p_{k-1}}{q_k+\frac{1}{0,5+x}q_{k-1}}\right)\cdot
          \frac{1}{\left({q_k+\frac{1}{0,5+x}q_{k-1}}\right)^2}+\\
      &+\frac{1}{(1+x)^2}\sum_{\stackrel{\alpha_i\in A}{i=\overline{1,k}}}   w_{m}\left(\frac{p_k+\frac{1}{1+x}p_{k-1}}{q_k+\frac{1}{1+x}q_{k-1}}\right)\cdot
     \frac{1}{\left({q_k+\frac{1}{1+x}q_{k-1}}\right)^2}=\\
     &=\frac{1}{(0,5+x)^2}\sum_{\stackrel{\alpha_i\in A}{i=\overline{1,k}}} w_{m}\left(\frac{0,5p_k+p_{k-1}+xp_k}{0,5q_k+q_{k-1}+xq_k}\right)
             \cdot\frac{(0,5+x)^2}{\left({0,5q_k+q_{k-1}+xq_k}\right)^2}+\\
     &+\frac{1}{(1+x)^2}\sum_{\stackrel{\alpha_i\in A}{i=\overline{1,k}}}  w_{m}\left(\frac{p_k+p_{k-1}+xp_k}{q_k+q_{k-1}+xq_k}\right)
     \cdot\frac{(1+x)^2}{\left({q_k+q_{k-1}+xq_k}\right)^2}=\\
     &=\sum_{\stackrel{\alpha_i\in A}{i=\overline{1,n+1}}}w_{m} \left(\frac{p_{k+1}+xp_k}{q_{k+1}+xq_k}\right)\cdot\frac{1}{\left({q_{k+1}+xq_k}\right)^2}.
   \qedhere \end{align*}
      \end{proof}
\begin{lemma}\label{lema6}
   If for some non-negative $t,T$, and natural $k$ one has $$
   \frac{t}{(1+x)(2+x)}\leq f_{k}^{\prime}(x)\leq\frac{T}{(1+x)(2+x)}\;\;\forall x\in I,
   $$ then
  \begin{equation}\label{eq}
    \frac{t}{(1+x)(2+x)}\leq f_{k+1}^{\prime}(x)\leq\frac{T}{(1+x)(2+x)}\;\;\forall x\in I.
  \end{equation}
 \end{lemma}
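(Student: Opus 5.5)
The plan is to differentiate the functional recursion of Lemma~\ref{lemma:basic} and then check that the weight $g(x)=\frac{1}{(1+x)(2+x)}$ is a fixed point of the resulting transfer operator. That operator is exactly the one appearing in Lemma~\ref{lema5}.

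\textbf{Step 1: differentiate the recursion.} By Lemma~\ref{lemma:basic} (equivalently, Lemma~\ref{lema1} together with its corollary and Remark~\ref{rem}, which gives the same identity for every $k$), we have
\[
f_{k+1}(x)=f_k(1)-f_k\left(\frac{1}{0,5+x}\right)+f_k\left(\frac23\right)-f_k\left(\frac{1}{1+x}\right).
\]
For $x\in I$ the arguments $\frac{1}{0,5+x}\in[\frac23;1]$ and $\frac{1}{1+x}\in[\frac12;\frac23]$ lie in $I$, and $f_k$ is differentiable on $I$ by the explicit formula in the proof of Lemma~\ref{lema3}. The chain rule then gives
\[
f_{k+1}^\prime(x)=f_k^\prime\left(\frac{1}{0,5+x}\right)\frac{1}{(0,5+x)^2}+f_k^\prime\left(\frac{1}{1+x}\right)\frac{1}{(1+x)^2}.
\]
So $w_k=f_k^\prime$ satisfies the recursion of Lemma~\ref{lema5}.

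\textbf{Step 2: apply the hypothesis termwise.} Both coefficients $\frac{1}{(0,5+x)^2}$ and $\frac{1}{(1+x)^2}$ are positive. Inserting the hypothesis $t\,g(y)\le f_k^\prime(y)\le T\,g(y)$ at $y=\frac{1}{0,5+x}$ and at $y=\frac{1}{1+x}$ therefore yields
\[
t\,(\mathcal{L}g)(x)\le f_{k+1}^\prime(x)\le T\,(\mathcal{L}g)(x),
\]
where $(\mathcal{L}g)(x)=g\left(\frac{1}{0,5+x}\right)\frac{1}{(0,5+x)^2}+g\left(\frac{1}{1+x}\right)\frac{1}{(1+x)^2}$.

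\textbf{Step 3: show $\mathcal{L}g=g$.} This is the only real computation and the key point of the lemma. For $a\in\{\frac12,1\}$ one has
\[
g\left(\frac{1}{a+x}\right)\frac{1}{(a+x)^2}=\frac{1}{(a+x+1)(2a+2x+1)}.
\]
For $a=\frac12$ this equals $\frac{1}{2(x+1)(x+\frac32)}$, and for $a=1$ it equals $\frac{1}{2(x+2)(x+\frac32)}$. Adding the two gives
\[
\frac{1}{2(x+\frac32)}\left(\frac{1}{x+1}+\frac{1}{x+2}\right)=\frac{1}{2(x+\frac32)}\cdot\frac{2x+3}{(x+1)(x+2)}=\frac{1}{(x+1)(x+2)}=g(x).
\]
This is exactly the invariance of the density $\frac{1}{(1+x)(2+x)}$, which also underlies the limit function $f$. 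Substituting $\mathcal{L}g=g$ into Step 2 gives \eqref{eq}.

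I expect no genuine obstacle. The only care needed is to justify that the recursion of Lemma~\ref{lemma:basic} holds for all $k$ (via Lemma~\ref{lema1}, rather than only for $k>1$), and to carry out the invariance identity correctly.
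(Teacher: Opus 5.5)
Your proof is correct and follows the paper's argument: differentiate the recursion of Lemma~\ref{lemma:basic}, bound both terms using the positivity of the coefficients $\frac{1}{(0,5+x)^2}$ and $\frac{1}{(1+x)^2}$, and check that the weight $\frac{1}{(1+x)(2+x)}$ is reproduced. The paper does this last check by telescoping, $\frac{0,5}{(1,5+x)(1+x)}+\frac{0,5}{(1,5+x)(2+x)}=\frac{1}{1+x}-\frac{1}{2+x}$, where you factor out $\frac{1}{2x+3}$. Your extra care in justifying the derivative recursion for every $k$ via Lemma~\ref{lema1} (since Lemma~\ref{lemma:basic} is stated only for $n>1$) fills in a point the paper passes over silently.
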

 \begin{proof}
   We will prove the left side of inequality \eqref{eq}; the reasoning for the right side is similar. We have
   $$
   f_{k+1}'(x)=\frac{1}{(0,5+x)^2}\cdot f_k'\left(\frac{1}{0,5+x}\right)+\frac{1}{(1+x)^2}\cdot f_k'\left(\frac{1}{1+x}\right)\geq
   $$
   $$\geq\frac{1}{(0,5+x)^2}\cdot\frac{t}{\left(1+\frac{1}{0,5+x}\right)\left(2+\frac{1}{0,5+x}\right)}+
   \frac{1}{(1+x)^2}\cdot\frac{t}{\left(1+\frac{1}{1+x}\right)\left(2+\frac{1}{1+x}\right)}=
   $$
   $$
   =\frac{0,5t}{(1,5+x)(1+x)}+\frac{0,5t}{(1,5+x)(2+x)}=t\left(\frac{1}{1+x}-\frac{1}{1,5+x}+\frac{1}{1,5+x}-\frac{1}{2+x}\right)=$$
   \[= \frac{t}{(1+x)(2+x)}.\qedhere
   \]
 \end{proof}
\begin{lemma}\label{lema7}
  If for some positive constants $C_1$, $C_2$ and a natural number $m$ 
  $$\frac{C_1}{(x+1)(x+2)}\leq   f_m^{\prime} (x) \leq  \frac{C_2}{(x+1)(x+2)} \;\; \forall x \in I,    $$
 then for every natural $k$
  $$\frac{g(C_1)}{(x+1)(x+2)}\leq   f_{m+k}^{\prime}(x) \leq  \frac{w(C_2)}{(x+1)(x+2)} \;\; \forall x \in I,    $$
  where
$$g(t)=t+\frac{15}{4}\left(1-t\ln\frac{10}{9}\right)-\left(\frac{99}{4}+\frac{384}{225}t\right)\cdot\gamma_k,$$
$$
w(t)=t-\frac{15}{4}\left(t\ln\frac{10}{9}-1\right)+\left(\frac{99}{4}+\frac{384}{225}t\right)\cdot\gamma_{k},
$$
and
\begin{equation}\label{maz}
\gamma_{n}=\frac{1}{\frac{4}{17}((\frac{5+\sqrt{17}}{4})(\frac{1+\sqrt{17}}{4})^{n}-(\frac{5-\sqrt{17}}{4})(\frac{1-\sqrt{17}}{4})^{n})
 ((\frac{3+\sqrt{17}}{2})(\frac{1+\sqrt{17}}{4})^{n}-(\frac{3-\sqrt{17}}{2})(\frac{1-\sqrt{17}}{4})^{n})}.
\end{equation}
\end{lemma}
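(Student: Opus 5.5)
The plan follows Kuzmin's classical scheme: subtract off the invariant density, propagate the nonnegative remainder with Lemma~\ref{lema5}, and replace each value of the remainder on a rank-$k$ cylinder by its average there, up to a mean-value error.

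\textbf{Step 1 (splitting off the invariant part).} Set $\rho(x)=\frac{1}{(x+1)(x+2)}$. Differentiating the recursion \eqref{frec} of Lemma~\ref{lemma:basic} gives
\[
f_{n+1}'(x)=f_n'\Big(\tfrac{1}{0,5+x}\Big)\tfrac{1}{(0,5+x)^2}+f_n'\Big(\tfrac{1}{1+x}\Big)\tfrac{1}{(1+x)^2},
\]
and by the computation in Lemma~\ref{lema6} the function $\rho$ is a fixed point of this linear operator. Hence for the lower bound put $w_n(x)=f_{m+n-m}'(x)-C_1\rho(x)$, i.e.\ $w_0=f_m'-C_1\rho\ge 0$ and $w_j=f_{m+j}'-C_1\rho$. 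Each $w_j$ satisfies the hypothesis of Lemma~\ref{lema5}, so
\[
w_k(x)=\sum_{(\alpha_1,\dots,\alpha_k)\in A^k} w_0(u_\alpha)\,\frac{1}{(q_k+xq_{k-1})^2},\qquad u_\alpha=\frac{p_k+xp_{k-1}}{q_k+xq_{k-1}}\in\Delta^{A_2}_{\alpha_1\dots\alpha_k}.
\]

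\textbf{Step 2 (averaging over cylinders).} By the mean value theorem,
\[
w_0(u_\alpha)\ge \frac{1}{|\Delta_\alpha|}\int_{\Delta_\alpha}w_0-|\Delta_\alpha|\sup_I|w_0'|.
\]
The derivative is controlled by $|w_0'|\le |f_m''|+C_1|\rho'|\le \frac{33}{8}+C_1\frac{64}{225}$, using Lemma~\ref{lema4} and the fact that $|\rho'(x)|=\frac{2x+3}{(x+1)^2(x+2)^2}$ is maximal at $x=\frac12$.

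For the main term, the quantity $\frac{1}{|\Delta_\alpha|(q_k+xq_{k-1})^2}=\frac{(q_k+0,5q_{k-1})(q_k+q_{k-1})}{(q_k+xq_{k-1})^2}$ is $\ge 1$ for $x\ge\frac12$ in the relevant direction. One must check how it combines with $(x+1)(x+2)$, since the target constant is $\frac{15}{4}=\min_I(x+1)(x+2)$. Then use $w_0\ge0$ and $\int_I w_0=f_m(1)-f_m(\tfrac12)-C_1\int_{1/2}^1\rho=1-C_1\ln\frac{10}{9}$; here $f_m(1)=\eta(T^{-m}(I))=1$ and $\int_{1/2}^1\rho=\ln\frac{10}{9}$. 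This gives a main contribution of at least $\frac{15}{4}(1-C_1\ln\frac{10}{9})\rho(x)$.

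\textbf{Step 3 (error term).} The error is bounded by
\[
\sup|w_0'|\sum_\alpha\frac{|\Delta_\alpha|}{(q_k+xq_{k-1})^2}\le \sup|w_0'|\cdot\max_\alpha|\Delta_\alpha|\cdot\sum_\alpha\frac{1}{(q_k+xq_{k-1})^2}.
\]
I expect $\gamma_k$ in \eqref{maz} to be exactly the maximal rank-$k$ cylinder length $\frac{1}{(q_{k-1}+q_k)(q_{k-1}+2q_k)}$ evaluated at the minimal denominators, obtained by solving the linear recurrences with characteristic roots $\frac{1\pm\sqrt{17}}{4}$. I also need $\sum_\alpha(q_k+xq_{k-1})^{-2}$ to be bounded by $1$ or absorbed, and then multiply by $(x+1)(x+2)\le 6$. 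This produces exactly $6\big(\frac{33}{8}+\frac{64}{225}C_1\big)\gamma_k=\big(\frac{99}{4}+\frac{384}{225}C_1\big)\gamma_k$. Adding $C_1\rho$ back yields $g(C_1)\rho\le f_{m+k}'$.

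The upper bound is symmetric: take $w=C_2\rho-f_{m+j}'\ge0$, for which $\int_I w=C_2\ln\frac{10}{9}-1$, and proceed in the same way to get $w(C_2)$.

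\textbf{Main obstacle.} The delicate step is the bookkeeping of constants. One has to verify that the minimal denominators really give $\max|\Delta_\alpha|=\gamma_k$, including the closed form in \eqref{maz} and the alternating pattern of $\frac12$'s and $1$'s that minimizes $q_k$. One also has to check that the weights $\frac{1}{|\Delta_\alpha|(q_k+xq_{k-1})^2}(x+1)(x+2)$ are at least $\frac{15}{4}$ and that the weighted sum is at most $6$, so that the stated constants come out exactly. If the crude bound $\ge 1$ on the weight fails for some $x$, I would instead bound $h$-type ratios over $q_k/q_{k-1}\in[\frac56;3]$, as in Theorem~\ref{lema2}.
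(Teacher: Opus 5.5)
Your overall scheme matches the paper's. You subtract $C_1\rho$ with $\rho(x)=\frac{1}{(x+1)(x+2)}$, propagate with Lemma~\ref{lema5}, compare values on rank-$k$ cylinders with averages via the mean value theorem, bound cylinder lengths by $\gamma_k$, and convert constants into multiples of $\rho$ using $\frac{15}{4}\le(x+1)(x+2)\le 6$.

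\textbf{The gap is in Step~3.} You apply the mean value theorem while still carrying the weights $W_\alpha(x)=(q_k+xq_{k-1})^{-2}$. Your error term is therefore $L\sum_\alpha|\Delta_\alpha|W_\alpha(x)$ with $L=\frac{33}{8}+\frac{64}{225}C_1$. The route you write down is to bound $\sum_\alpha W_\alpha(x)\le 1$ and then use $(x+1)(x+2)\le 6$, and this fails:
\begin{itemize}
\item $\sum_\alpha W_\alpha(x)=\frac12 f_k'(x)$ is strictly decreasing in $x$.
\item Its mean over $I$ is exactly $1$, because $\int_I W_\alpha(x)\,dx=|\Delta_\alpha|$ and $\sum_\alpha|\Delta_\alpha|=\frac12$.
\item Hence it exceeds $1$ near $x=\frac12$ for every $k$; for $k=1$ it equals $1+\frac49=\frac{13}{9}$ at $x=\frac12$.
\end{itemize}
Your closing remark that ``the weighted sum is at most $6$'' names the right inequality, $(x+1)(x+2)\sum_\alpha W_\alpha(x)\le 6$. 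That inequality is exactly what yields the constant $\frac{99}{4}+\frac{384}{225}C_1$, but nothing in the proposal proves it.

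\textbf{How the paper avoids this.} It orders the steps differently. First it uses $w_0\ge 0$ and $W_\alpha(x)\ge|\Delta_\alpha|$ to get $w_k(x)\ge\sum_\alpha w_0(u_\alpha)|\Delta_\alpha|$. The weight bound holds for all $x\in I$, since $(q_k+xq_{k-1})^2\le(q_k+q_{k-1})(2q_k+q_{k-1})$. Only then does it apply the mean value theorem, so the error is $L\sum_\alpha|\Delta_\alpha|^2\le L\gamma_k$. The resulting lower bound $1-C_1\ln\frac{10}{9}-L\gamma_k$ is a constant, and $\frac{15}{4}\le(x+1)(x+2)\le 6$ turns it into $(g(C_1)-C_1)\rho(x)$.

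\textbf{How to keep your ordering.} The missing estimate follows from Lemma~\ref{lema6} started at $f_0'\equiv 2\le\frac{12}{(x+1)(x+2)}$. This gives $\sum_\alpha W_\alpha(x)=\frac12 f_k'(x)\le 6\rho(x)$, hence the error bound $6L\gamma_k\rho(x)$ directly.

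\textbf{Two smaller slips.}
\begin{itemize}
\item The correct identity is $\frac{1}{|\Delta_\alpha|(q_k+xq_{k-1})^2}=\frac{2(q_k+\frac12 q_{k-1})(q_k+q_{k-1})}{(q_k+xq_{k-1})^2}$. Without the factor $2$ the quantity is below $1$ at $x=1$; with it, it is always $\ge 1$.
\item The longest rank-$k$ cylinder is the one with all $\alpha_i=\frac12$, not an alternating one, since continuants increase in each entry. This is what produces the roots $\frac{1\pm\sqrt{17}}{4}$ of $r^2=\frac12 r+1$ in \eqref{maz}.
\end{itemize}
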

\begin{proof}
It is clear that
$$
f_{m}^\prime(x)\geq\frac{C_1}{(x+1)(x+2)} \quad \forall x\in I\Rightarrow \int\limits_{0,5}^{1}f_m^\prime(x)dx\geq\int\limits_{0,5}^{1}\frac{C_1dx}{(x+1)(x+2)}\Rightarrow C_1\leq\frac{1}{\ln\frac{10}{9}}.
$$
Arguing analogously with respect to $C_2$, we obtain
\begin{equation}\label{eqit}
C_1 \leq \frac{1}{\ln\frac{10}{9}} \leq C_2.
\end{equation}

From Lemma~\ref{lemma:basic} it follows that
    $$
    f_n(x)=1-f_n\left( \frac{1}{0,5+x}\right)+f_n\left(\frac{2}{3} \right)-f_n\left(\frac{1}{1+x} \right).
    $$
    Differentiating the last equality, we obtain
    $$
    f_{n+1}^\prime(x)=f_n^\prime\left( \frac{1}{0,5+x}\right)\cdot\frac{1}{(0,5+x)^2}+f_n^\prime\left( \frac{1}{1+x}\right)\cdot\frac{1}{(1+x)^2}.
    $$
    Since for $C \in R$ and for all $\forall x \in I$ the equality
 $$
    \frac{C}{(x+1)(x+2)}=\frac{C}{(\frac{1}{0,5+x}+1)
    (\frac{1}{0,5+x}+2)}\cdot\frac{1}{(0,5+x)^2}+\frac{C}
    {(\frac{1}{1+x}+1)(\frac{1}{1+x}+2)}
    \cdot\frac{1}{(1+x)^2},
$$
holds, it follows from Lemma \ref{lema6} that the sequence of functions
 $$\varphi_n(x)= f_n^\prime(x)-\frac{C_1}{(x+1)(x+2)}$$
  for arbitrary natural numbers $n$ and $m$ satisfies the condition
  $$
  \varphi_{m+n}(x)=\sum_{(\alpha_1,...,\alpha_n)\in A^n}\varphi_m\left(\frac{p_n+xp_{n-1}}{q_n+xq_{n-1}} \right)\cdot\frac{1}{(q_n+xq_{n-1})^2}.
  $$

 Denote $u_{n}(x)=\frac{p_n+xp_{n-1}}{q_n+xq_{n-1}}$.  Since the function $h_{4}(z)=\frac{z+0,5}{z+1}=1-\frac{0,5}{z+1}$
  is increasing on the interval $[\frac{5}{6};3]$, it follows that
 $$
   \frac{1}{(q_n+xq_{n-1})^2}:\frac{1}{(q_n+0,5q_{n-1})(q_n+q_{n-1})}=
   \frac{(q_n+0,5q_{n-1})(q_n+q_{n-1})}{(q_n+xq_{n-1})^2}\geq
  $$
  $$
  \geq\frac{q_n+0,5q_{n-1}}{q_n+q_{n-1}}=h_{4}\left(\frac{q_n}{q_{n-1}}\right)\geq\frac{8}{11}>\frac{1}{2} \;\; \forall x\in I,
  $$
   and consequently
  $$
\varphi_{m+n}(x)\geq  \sum_{(\alpha_1,...,\alpha_n)\in A^n}\varphi_m(u_n(x))\cdot\lambda(\Delta^{A_2}_{\alpha_1\alpha_2...\alpha_n}).
$$
It is known~\cite{prats}, that
$$\Delta^{A_2}_{\alpha_1\alpha_2...\alpha_n}=\left[\frac{p_{n}+p_{n-1}}{q_{n}+q_{n-1}};\frac{p_{n}+0,5p_{n-1}}{q_{n}+0,5q_{n-1}}\right]^{*},
     \lambda(\Delta^{A_2}_{{\alpha_1\alpha_2...\alpha_n}})=\frac{1}{(q_{n-1}+q_{n})(q_{n-1}+2q_{n})}, $$
$$q_{n}\geq \frac{2\sqrt{17}}{17}\left( \left(\frac{1+\sqrt{17}}{4}\right)^{n+1}-\left(\frac{1-\sqrt{17}}{4}\right)^{n+1}\right), $$
whence it readily follows that, in view of equality \eqref{maz} $\lambda(\Delta^{A_2}_{\alpha_1\alpha_2...\alpha_n}) \leq \gamma_{n}$.

We note that
$$\frac{p_{n}+p_{n-1}}{q_{n}+q_{n-1}}-\frac{p_{n}+xp_{n-1}}{q_{n}+xq_{n-1}}=\frac{(x-1)(p_{n}q_{n-1}-p_{n-1}q_{n})}{(q_{n}+q_{n-1})(q_{n}+xq_{n-1})}=
\frac{(x-1)(-1)^{n+1}}{(q_{n}+q_{n-1})(q_{n}+xq_{n-1})},$$

$$\frac{p_{n}+0,5p_{n-1}}{q_{n}+0,5q_{n-1}}-\frac{p_{n}+xp_{n-1}}{q_{n}+xq_{n-1}}=\frac{(x-0,5)(p_{n}q_{n-1}-p_{n-1}q_{n})}{(q_{n}+0,5q_{n-1})(q_{n}+xq_{n-1})}=
\frac{(x-0,5)(-1)^{n+1}}{(q_{n}+0,5q_{n-1})(q_{n}+xq_{n-1})},$$
than $u_{n}(x) \in \Delta^{A_2}_{\alpha_1\alpha_2...\alpha_n}$    for all $x \in I$.

By the mean value theorem, there exist points $u^{*}_{k}(x)\in\Delta^{A_2}_{\alpha_1(x)\alpha_2(x)...\alpha_k(x)}$ such that
$$
\int_{0,5}^{1}\varphi_m(z)dz=\sum\limits_{(\alpha_1,...,\alpha_k)\in A^k}\int\limits_{\Delta^{A_2}_{\alpha_1\alpha_2...\alpha_k}}\varphi_m(z)dz=
\sum\limits_{(\alpha_1,...,\alpha_k)\in A^k}\varphi_m(u_{k}^*(x))\cdot\lambda(\Delta^{A_2}_{\alpha_1\alpha_2...\alpha_k}).
$$
We obtain
$$
\varphi_{k+m}(x)-\int_{0,5}^{1}\varphi_m(z)dz\geq\sum\limits_{(\alpha_1,...,\alpha_k)\in A^k}(\varphi_m(u_{k}(x))-\varphi_m(u_{k}^{*}(x)))\lambda(\Delta^{A_2}_{\alpha_1\alpha_2...\alpha_k}).
$$
Taking into account Lemma \ref{lema3}, we have
$$
\left|\varphi_m(z)\right|\leq\max_{[0,5;1]}\frac{
\max\left(|\frac{1}{\ln\frac{10}{9}}-\frac{60}{7}|;|\frac{21}{2}-\frac{1}{\ln\frac{10}{9} }|\right)}{(x+1)(x+2)} \leq   \frac{1,0088}{1,5\cdot2,5}=\frac{1,0088}{3,75}\leq 0,2691.
$$

It is clear that the function $h_5(z)=\frac{1}{(z+1)^2}-\frac{1}{(z+2)^2}$ is decreasing on the interval $[0,5;1]$, since $h_5^\prime(z)=-\frac{2}{(z+1)^3}+\frac{2}{(z+2)^3}<0$ where $z\in I.$
Hence,
 $$\max_{[0,5;1]}\left|h_5(z)\right|=h_5(0,5)=\frac{1}{2,25}-\frac{1}{6,25}=\frac{64}{225}.$$

Taking into account Lagrange's theorem and Lemma \ref{lema4}, we obtain
$$
\left|\varphi_m(u_k(x))-\varphi_m(u_k^{*}(x))\right| \leq\left|f_m^\prime(u_k(x))-f_m^\prime(u_k^{*}(x))\right|+
$$
$$
+\left|\frac{C_1}{(u_k(x)+1)(u_k(x)+2)}-\frac{C_1}{(u_k^{*}(x)+1)(u_k^{*}(x)+2)}\right|\leq
$$
$$
\leq\left(\max_{t\in\Delta^{A_2}_{\alpha_1...\alpha_k}}f_m^{\prime\prime}(t)+
\max_{t\in\Delta^{A_2}_{\alpha_1...\alpha_k}}\left(\frac{C_1}{(z+1)(z+2)}\right)^\prime\Bigg|_{z=t}\right)
\cdot\lambda(\Delta^{A_2}_{\alpha_1...\alpha_k})\leq
$$
$$\leq    \frac{33}{8}\cdot\lambda(\Delta^{A_2}_{\alpha_1...\alpha_k})+C_1\cdot\max_{t\in\Delta^{A_2}_{\alpha_1...\alpha_k}}
\left|\frac{1}{(t+2)^2}-\frac{1}{(t+1)^2}\right|\cdot\lambda(\Delta^{A_2}_{\alpha_1...\alpha_k})
\leq\left(\frac{33}{8}+\frac{64C_1}{225}\right)
\cdot\lambda(\Delta^{A_2}_{\alpha_1...\alpha_k}).$$

We have
$$
\varphi_{m+k}(x)\geq\int_{0,5}^{1}\varphi_m(z)dz-\left(\frac{33}{8}+\frac{64}{225}C_1\right)\gamma_k,
$$
$$
f_{m+k}^\prime(x)\geq\frac{C_1}{(x+1)(x+2)}+\int_{0,5}^{1}\left(f_m^\prime(z)-\frac{C_1}{(z+1)(z+2)}\right)dz-\left(\frac{33}{8}+\frac{64}{225}C_1\right)\gamma_{k}=
$$
$$
=\frac{C_1}{(x+1)(x+2)}+\left(1-\ln\left(\frac{10}{9}\right)C_1\right)-\left(\frac{33}{8}+\frac{64}{225}C_1\right)\gamma_k\geq\frac{g(C_1)}{(x+1)(x+2)},
$$

Similarly, consider the function $\psi_n(x)=\frac{C_2}{(x+1)(x+2)}-f_n^\prime(x)$
and obtain
$$
\psi_{m+k}(x)\geq\int_{0,5}^{1}\psi_m(z)dz-\left(\frac{33}{8}+\frac{64}{225}C_2\right)\gamma_{k},
$$
\[
f_{m+k}^\prime(x)\leq\frac{C_2}{(x+1)(x+2)}-\left(\ln \left(\frac{10}{9}\right)C_2-1\right)+\left(\frac{33}{8}+\frac{64}{225}C_2\right)\gamma_k\leq\frac{w(C_2)}{(x+1)(x+2)}.
\qedhere\]
\end{proof}
\begin{theorem}
For every $n\in N$ and $x\in I$, the following condition holds
\begin{equation}\label{mr}
  -\ln\frac{10}{9}|r_{[\sqrt{n}]}|\leq f_n(x)-\ln^{-1}\left(\frac{10}{9}\right)\ln\frac{5(x+1)}{3(x+2)}
\leq\ln\frac{10}{9}|d_{[\sqrt{n}]}|,
\end{equation}
where
$$
r_{n}=-\ln^{-1}\left(\frac{10}{9}\right)+\left(\frac{60}{7}-\frac{\frac{15}{4}-\frac{99}{4}\gamma_n}
{\frac{15}{4}\ln\frac{10}{9}+\frac{384}{225}\gamma_n}\right)\cdot \left(1-\frac{15}{4}\ln\frac{10}{9}-\frac{384}{225}\gamma_n\right)^{n-1}+
\frac{\frac{15}{4}-\frac{99}{4}\gamma_n}{\frac{15}{4}\ln\frac{10}{9}+\frac{384}{225}\gamma_n},
$$
$$
d_{n}=-\ln^{-1}\left(\frac{10}{9}\right)+\left(\frac{21}{2}-\frac{\frac{99}{4}\gamma_n+\frac{15}{4}}
{\frac{15}{4}\ln\frac{10}{9}-\frac{384}{225}\gamma_n}\right)\cdot \left(1-\frac{15}{4}\ln\frac{10}{9}+\frac{384}{225}\gamma_n\right)^{n-1}+
 \frac{\frac{99}{4}\gamma_{n}+\frac{15}{4}}{\frac{15}{4}\ln\frac{10}{9}-\frac{384}{225}\gamma_n},
$$
the sequence $(\gamma_{n})$ is defined by equality \eqref{maz},$[n]$ denotes the integer part of $n$.
\end{theorem}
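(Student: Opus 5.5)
We will prove the estimate by iterating Lemma~\ref{lema7} in blocks of length $k=[\sqrt n]$ and then integrating the resulting two-sided bounds for $f_n'$. Put $f(x)=\ln^{-1}\left(\frac{10}{9}\right)\ln\frac{5(x+1)}{3(x+2)}$. This is the primitive of $\frac{1/\ln(10/9)}{(x+1)(x+2)}$ vanishing at $x=\frac12$, and $f(1)=1=f_n(1)$. So we need two-sided bounds on $f_n'(x)-\frac{1/\ln(10/9)}{(x+1)(x+2)}$ of the form $\frac{\varepsilon_n}{(x+1)(x+2)}$. Integrating such a bound over $[\frac12;x]$ multiplies it by at most $\int_{0,5}^1\frac{dz}{(z+1)(z+2)}=\ln\frac{10}{9}$, which explains the factor $\ln\frac{10}{9}$ in \eqref{mr}.

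\textbf{Step 1: start and iterate.} Lemma~\ref{lema3} gives the hypothesis of Lemma~\ref{lema7} for $m=1$, with $C_1^{(0)}=\frac{60}{7}$ and $C_2^{(0)}=\frac{21}{2}$. We fix $k=[\sqrt n]$ and apply Lemma~\ref{lema7} repeatedly, each time with the constants produced by the previous application. This gives sequences
\[
C_1^{(j+1)}=g(C_1^{(j)}),\qquad C_2^{(j+1)}=w(C_2^{(j)}),
\]
valid for $f'_{1+jk}$. We write $g(t)=at+b$ with $a=1-\frac{15}{4}\ln\frac{10}{9}-\frac{384}{225}\gamma_k$ and $b=\frac{15}{4}-\frac{99}{4}\gamma_k$. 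Then the affine recursion solves explicitly as
\[
C_1^{(j)}=t^*+(C_1^{(0)}-t^*)a^{j},\qquad t^*=\frac{b}{1-a}=\frac{\frac{15}{4}-\frac{99}{4}\gamma_k}{\frac{15}{4}\ln\frac{10}{9}+\frac{384}{225}\gamma_k}.
\]
This is exactly the structure of $r_k$, with $j=k-1$: the $n-1$ in the exponent of $r_n$, read at index $[\sqrt n]$. The same computation for $w$, with $a'=1-\frac{15}{4}\ln\frac{10}{9}+\frac{384}{225}\gamma_k$, gives $d_k$.

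\textbf{Step 2: monotonicity and passing to all indices.} Lemma~\ref{lema6} says that a bound of the form $\frac{t}{(x+1)(x+2)}$ on $f'_j$ persists for all later indices. Hence the bound obtained at index $1+(k-1)k\le n$ also holds at index $n$. One must check that $0<a<1$ (true since $\frac{15}{4}\ln\frac{10}{9}\approx0.395$ and $\gamma_k$ is tiny) and that the iterates stay positive, so that Lemma~\ref{lema7} remains applicable at each step.

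\textbf{Step 3: integrate.} This gives
\[
\frac{C_1^{(k-1)}}{(x+1)(x+2)}\le f_n'(x)\le\frac{C_2^{(k-1)}}{(x+1)(x+2)}.
\]
Subtracting $\frac{1/\ln(10/9)}{(x+1)(x+2)}$ and integrating from $\frac12$ to $x$ bounds $f_n(x)-f(x)$ between $r_k\int_{1/2}^x$ and $d_k\int_{1/2}^x$. Since $0\le\int_{1/2}^x\frac{dz}{(z+1)(z+2)}\le\ln\frac{10}{9}$, we obtain \eqref{mr}.

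\textbf{Main obstacle.} The delicate point is the bookkeeping in Step 1. We must make sure the reapplication of Lemma~\ref{lema7} uses a fixed block length $k$, so that $\gamma_k$ is constant and the recursion is genuinely affine. We must also reconcile the exponent with the number of blocks that fit into $n$ and check the sign conditions; in particular, $\frac{15}{4}\ln\frac{10}{9}-\frac{384}{225}\gamma_k>0$ is needed for the denominator in $d_k$. For small $n$, where $[\sqrt n]$ is tiny, we would verify the inequality directly, or note that it is then weak enough to follow from Lemma~\ref{lema3} alone.
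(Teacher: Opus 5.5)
Your proposal is correct and follows the paper's proof: a fixed block length $k=[\sqrt n]$, $k-1$ applications of Lemma~\ref{lema7} starting from the constants $\frac{60}{7},\frac{21}{2}$ of Lemma~\ref{lema3}, the closed form of the affine recursion, Lemma~\ref{lema6} to pass to index $n$, and integration using $0\le\int_{0,5}^{x}\frac{dz}{(z+1)(z+2)}\le\ln\frac{10}{9}$. The differences are minor. You start the chain at $m=1$ and end at index $1+(k-1)k$, while the paper starts at $m=t$ and ends at $t^2$. Your positivity check, which the paper omits, does hold for every $k$. The sign condition you list for the denominator in $d_k$ is not needed: it fails at $k=1$ (where $\gamma_1=\frac13$), but there are zero iterations at $k=1$, and only nonvanishing of that denominator matters.
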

\begin{proof}
Denote $g^{(n)}(x)=\underbrace{g(g(\ldots g(x)))}_{n},\quad w^{(n)}(x)=\underbrace{w(w(\ldots w(x)))}_{n}.
$
For a given natural number $t$, in Lemma~\ref{lema7} we take the pairs $(m,k)$ to be $(t,t)$,$(2t,t)$,\ldots,$((t-1)t,t).$

Taking into account Lemma \ref{lema3} and condition \eqref{eqit}, we successively obtain
$$
f_{2t}^\prime(x)\geq\frac{g(\frac{60}{7})}{(x+1)(x+2)} \quad \forall x\in I,
$$
$$
f_{3t}^\prime(x)\geq\frac{g^{(2)}(\frac{60}{7})}{(x+1)(x+2)} \quad \forall x\in I,
$$
$$
\ldots \quad \ldots \quad \ldots
$$
$$
f_{t^2}^\prime(x)\geq\frac{g^{(t-1)}(\frac{60}{7})}{(x+1)(x+2)} \quad \forall x\in I.
$$

It is easy to see that for the recursive sequence $x_{n+1}=a\cdot x_n+b$, with $a\neq1$, the following equality holds 
$$
x_n=\left(x_0-\frac{b}{1-a}\right)a^n+\frac{b}{1-a}.
$$
Taking $x_{0}=\frac{60}{7}, \quad a=1-\frac{15}{4}\ln\frac{10}{9}-\frac{384}{225}\gamma_t,  \quad b=\frac{15}{4}-\frac{99}{4}\gamma_t$, we obtain
$$
g^{(t-1)}(\frac{60}{7})=\left(\frac{60}{7}-\frac{\frac{15}{4}-\frac{99}{4}\gamma_t}{\frac{15}{4}
\ln\frac{10}{9}+\frac{384}{225}\gamma_t}\right)\cdot \left(1-\frac{15}{4}\ln\frac{10}{9}-\frac{384}{225}\gamma_t\right)^{t-1}+
\frac{\frac{15}{4}-\frac{99}{4}\gamma_t}{\frac{15}{4}\ln\frac{10}{9}+\frac{384}{225}\gamma_t}.
$$
Similarly,
$$
f_{t^2}^\prime(x)\leq\frac{w^{(t-1)}(\frac{21}{2})}{(x+1)(x+2)} \quad \forall x\in I,
$$
where
$$
w^{(t-1)}(\frac{21}{2})=\left(\frac{21}{2}-\frac{\frac{99}{4}\gamma_t+
\frac{15}{4}}{\frac{15}{4}\ln\frac{10}{9}-\frac{384}{225}\gamma_t}\right)\cdot \left(1-\frac{15}{4}\ln\frac{10}{9}+\frac{384}{225}\gamma_t\right)^{t-1}+
 \frac{\frac{99}{4}\gamma_{t}+\frac{15}{4}}{\frac{15}{4}\ln\frac{10}{9}-\frac{384}{225}\gamma_t}.
$$
Thus, for every natural number $t$
$$
\frac{\ln^{-1}\left(\frac{10}{9}\right)}{(x+1)(x+2)}+\frac{r_t}{(x+1)(x+2)}\leq f_{t^2}^\prime(x)\leq\frac{\ln^{-1}\left(\frac{10}{9}\right)}{(x+1)(x+2)}+\frac{d_t}{(x+1)(x+2)}.
$$

Taking $t=[\sqrt{n} ]$ and using Lemma~\eqref{lema6}, together with the fact that $[\sqrt{n}]^2\leq(\sqrt{n})^2=n$ we obtain
$$
\frac{\ln^{-1}\left(\frac{10}{9}\right)}{(x+1)(x+2)}+\frac{r_{[\sqrt{n}]}}{(x+1)(x+2)}\leq f_{n}^\prime(x)\leq\frac{\ln^{-1}\left(\frac{10}{9}\right)}{(x+1)(x+2)}+\frac{d_{[\sqrt{n}]}}{(x+1)(x+2)}.
$$
Denoting
$$f(x)=\frac{\ln\left(x+1\right)-\ln\left(x+2\right)+\ln\frac{5}{3}}{\ln\frac{10}{9}}=
\ln^{-1}\left(\frac{10}{9}\right)\ln\frac{5(x+1)}{3(x+2)}$$
and integrating the last inequality over the interval $[0,5;\tau]$ ($\tau \in I$), we obtain
 $$
\ln\left(\frac{10}{9}\right)r_{[\sqrt{n}]}f(\tau)\leq f_n(\tau)-\frac{\ln\left(\tau+1\right)-\ln\left(\tau+2\right)+\ln\frac{5}{3}}{\ln\frac{10}{9}}\leq\ln\left(\frac{10}{9}\right)
d_{[\sqrt{n}]}f(\tau),
$$
whence, taking into account that $0\leq f(x)\leq 1$ for  $x\in I$, equality~\eqref{mr} follows, which completes the proof.
\end{proof}

Remark that
$$\left(\frac{9+\sqrt{17}}{8}\right)^{-1} \approx 0,6599; \;\;  1-\frac{15}{4}\ln \frac{10}{9} \approx 0,6049; \;\; \ln \frac{10}{9}  \approx 0,1054,  $$
$$\frac{\frac{99}{4}ln\frac{10}{9}+ \frac{384}{225}}{ \frac{15}{4}\ln\frac{10}{9}}  \approx 10,9196;  \;\; \gamma_{n}\sim      \frac{34}{(5+\sqrt{17})(3+\sqrt{17})}     \left(\frac{9+\sqrt{17}}{8}\right)^{-n},    $$
$$
|r_n|\sim      \frac{\frac{99}{4}\ln\frac{10}{9}+ \frac{384}{225}}{ \frac{15}{4}\ln \frac{10}{9}}      \left(\frac{9+\sqrt{17}}{8}\right)^{-n}\sim
|d_n|.
$$
\begin{corollary}
There exists a natural number $M$ such that for every natural number $n\geq M$
$$
 \left|f_{n}(x)-\ln^{-1}\left(\frac{10}{9}\right)\ln\frac{5(x+1)}{3(x+2)}\right|\leq 10,9197 \cdot 0,66^{\sqrt{n}}, \forall x \in I.
$$
\end{corollary}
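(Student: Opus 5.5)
The corollary should follow directly from the preceding Theorem by an asymptotic analysis of $r_t$ and $d_t$ with $t=[\sqrt{n}]$. By \eqref{mr}, $|f_n(x)-f(x)|\le \ln\frac{10}{9}\max(|r_{[\sqrt n]}|,|d_{[\sqrt n]}|)$ uniformly in $x\in I$. So it suffices to show that for all large $t$,
\[
\ln\tfrac{10}{9}\,\max(|r_t|,|d_t|)\le 10{,}9197\cdot 0{,}66^{t+1},
\]
and then use $[\sqrt n]+1>\sqrt n$ together with $0{,}66<1$ to replace $0{,}66^{[\sqrt n]+1}$ by $0{,}66^{\sqrt n}$.

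First I would get the asymptotics of $\gamma_t$ from \eqref{maz}. Put $\rho=\frac{1+\sqrt{17}}{4}$. The conjugate root has $|\frac{1-\sqrt{17}}{4}|<1<\rho$, so $\gamma_t\sim c\,\rho^{-2t}$ with $c=\frac{34}{(5+\sqrt{17})(3+\sqrt{17})}$. Note that $\rho^2=\frac{9+\sqrt{17}}{8}\approx 1{,}5154$, whose reciprocal is $\approx 0{,}6599<0{,}66$, matching the Remark.

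Next I would expand $r_t$ to first order in $\gamma_t$. Write $L=\ln\frac{10}{9}$, $a_t=1-\frac{15}{4}L-\frac{384}{225}\gamma_t$ and $B_t=\frac{\frac{15}{4}-\frac{99}{4}\gamma_t}{\frac{15}{4}L+\frac{384}{225}\gamma_t}$. Then
\[
r_t=-L^{-1}+\bigl(\tfrac{60}{7}-B_t\bigr)a_t^{t-1}+B_t .
\]
Since $B_t\to L^{-1}$, expanding gives
\[
B_t-L^{-1}=-\frac{\frac{99}{4}L+\frac{384}{225}}{\frac{15}{4}L^2}\,\gamma_t+O(\gamma_t^2).
\]
The transient term $(\frac{60}{7}-B_t)a_t^{t-1}$ decays like $0{,}6049^{t}$. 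Because $0{,}6049<0{,}6599$, it is $o(\rho^{-2t})$. Hence
\[
L|r_t|\sim \frac{\frac{99}{4}L+\frac{384}{225}}{\frac{15}{4}L}\,\gamma_t \sim 10{,}9196\cdot c\,\rho^{-2t}.
\]
The same computation, with the signs of the $\gamma_t$ terms flipped, gives the same leading term for $d_t$. The only change is that the denominator $\frac{15}{4}L-\frac{384}{225}\gamma_t$ must be positive, which holds for $t$ large.

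Finally I would compare constants. Numerically $c\approx 34/(9{,}123\cdot 7{,}123)\approx 0{,}523$, and $\rho^{-2t}=0{,}6599^{t}\le 0{,}66^{t}$. So for all large $t$ the main term is at most $10{,}9196\cdot 0{,}523\cdot 0{,}66^{t}$, which is below $10{,}9197\cdot 0{,}66^{t+1}$ because $0{,}523<0{,}66$. That leaves room for the $o(\cdot)$ corrections, which yields $M$.

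I expect the main obstacle to be keeping the three error sources under control together: the $O(\gamma_t^2)$ remainder, the $0{,}6049^{t}$ transient, and the $\frac{1-\sqrt{17}}{4}$ terms inside $\gamma_t$. I would handle this by noting that each one is $o(0{,}6599^{t})$ while the target bound has strict slack in both the constant and the base. Since only existence of $M$ is claimed, limits rather than explicit thresholds suffice.
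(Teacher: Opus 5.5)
Your argument is correct and follows the paper's own route: the Remark before the corollary records exactly these asymptotics ($\gamma_n\sim c\,\rho^{-2n}$, transient base $1-\frac{15}{4}\ln\frac{10}{9}\approx 0{,}6049$, leading constant $\approx 10{,}9196$), and you handle the bookkeeping more carefully than the paper, namely the factor $c\approx 0{,}523$ and the loss of one factor $0{,}66$ in passing from $[\sqrt n]$ to $\sqrt n$. One numerical slip, inherited from the paper's Remark: $\frac{9+\sqrt{17}}{8}\approx 1{,}6404$, so $\rho^{-2}\approx 0{,}6096$ rather than $0{,}6599$; this is harmless, since $0{,}6049<0{,}6096<0{,}66$ still gives both the domination of the transient and the slack your final comparison uses.
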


\textit{Conclusion.} In conclusion, we note that a subject of further research is to obtain estimates for the rate of convergence of the sequence $f_n(x)$ to the limiting function $f(x)$.

\end{document}